\documentclass[review]{elsarticle}
\usepackage{color}
\usepackage{lipsum} 
\usepackage{xcolor}
\usepackage{amsmath,amsthm,amssymb}
\usepackage{amsfonts}
\usepackage{amscd}

\usepackage{graphicx}
\usepackage{enumitem}
\usepackage{mathtools}

\DeclareMathOperator*{\Proj}{Proj}
\DeclareMathOperator*{\Int}{Int}
\usepackage{enumitem}

\usepackage{amsmath}
\usepackage{amsfonts}
\usepackage{amssymb}
\usepackage{amsthm}
\usepackage{amscd}
\usepackage[hidelinks]{hyperref}
\usepackage[ruled,linesnumbered]{algorithm2e}
\usepackage[numbers]{natbib}
\usepackage{mathrsfs}
\usepackage{color}
\usepackage{lipsum} 
\usepackage{xcolor}



\interfootnotelinepenalty=10000

\newcounter{cases}
\newcounter{subcases}[cases]
\newenvironment{mycase}
{
    \setcounter{cases}{0}
    \setcounter{subcases}{0}
    \newcommand{\case}
    {
        \par\indent\stepcounter{cases}\textbf{Case \thecases.}
    }
    
}
{
    \par
}
\renewcommand*\thecases{\arabic{cases}}

\journal{Constructive Approximation}









\bibliographystyle{elsarticle-num}

\begin{document}
\newcommand{\R}{\mathbb{R}}
\newcommand{\origin}{\mathcal{O}}
\newcommand{\sphere}{\mathbb{S}}

\newtheorem{Def}{Definition}[section]
\newtheorem{claim}{Claim}[section]
\newtheorem{lemma}{Lemma}[section]
\newtheorem{prop}{Proposition}[section]
\newtheorem{conj}{Conjecture}[section]
\newtheorem{theorem}{Theorem}[section]
\newtheorem{coro}{Corollary}[section]

\begin{frontmatter}

\title{Weak solutions to the near-field reflector problem with spatial restrictions approached with generalized reflectors constructed from ellipsoids}

\author{Dylanger S. Pittman}
\address{400 Dowman Drive, Atlanta}

\ead{dpittm2@emory.edu}

\begin{abstract}
We motivate then formulate a novel variant of the near-field reflector problem and call it the near-field reflector problem with spatial restrictions. Let $\origin$ be an anisotropic point source of light and assume that we are given a bounded open set $U$. Suppose that the light emitted from the source at $\origin$ in directions defined by the aperture $D\subseteq \sphere^2$, of radiance $g(m)$ for $m\in D$, is reflected off $R\subset \overline{U}$, creating the irradiance $f(x)$ for $x\in T$. The inverse problem consists of constructing the reflector $R\subseteq \overline{U}$ from the given position of the source $\origin$, the input aperture $D$, radiance $g$, `target' set $T$, and irradiance $f$. We focus entirely on the case where the target set $T$ is finite. 
\end{abstract}

\begin{keyword}
 partial differential equations \sep geometric optics \sep geometry
\MSC[2020] 78A05 \sep  35 \sep 51\sep 53
\end{keyword}

\end{frontmatter}

\section{Introduction}

Let $\origin$ be the origin of $\R^3$, and let $\sphere^2$ be the unit sphere centered at $\origin$. We treat points on $\sphere^2$ as unit vectors with initial points at $\origin.$ Let an {\it aperture} be a subset of $\sphere^2$; in our work, the aperture will be an open set. Physically, it makes sense to consider $\origin$ as the location of an anisotropic point source of light such that rays of light are emitted in a set of directions defined by an aperture $D\subseteq \sphere^2$.

\begin{Def}\label{reflectorDef}
    Assume that we are given an aperture that is a connected open set $D\subseteq \sphere^2$, and a function $\rho:D\to (0,\infty)$ that is continuous and almost everywhere differentiable. Then a {\bf reflector} is the set $R=\{m\rho(m)|m\in D\}\subset \R^3$. 
    
    If $\rho$ is a smooth function, we can call $R$ a {\bf smooth reflector}.
\end{Def}

 Given an aperture, $D$, that is a connected open set, assume that we have a continuous, almost everywhere differentiable, positive function $\rho:D\to (0,\infty)$, and a corresponding reflector $R=\{m\rho(m)|m\in D\}.$ Suppose that a ray originating from $\origin$ in the direction $m\in D$ is incident on the reflector $R$ at the point $m\rho(m)$. If $\rho$ is differentiable at $m$, there is a unit vector, $n(m)$, normal to the reflector $R$ at $m\rho(m)$. Therefore, by the reflection law of geometric optics, a ray from $\origin$ of direction $m$ reflects off the point $m\rho(m)$ in the direction 
\begin{equation}\label{geo_reflect}
    y(m)=m-2\langle m,n(m)\rangle n(m)
\end{equation}
where $\langle m,n(m)\rangle$ is the standard Euclidean inner product in $\R^3$ and $n(m)$ is oriented such that $\langle m,n(m)\rangle>0$ \cite{born_wolf_2019}. 

The reflector $R$ is designed such that the ray described by the point $m\rho(m)\in R$ and the direction $y(m)$ corresponds to some element in a prespecified {\it target set} $T$. What one means by a `target set' changes depending on the context, and the correspondence between $y(m)$; also, an element of the target set can also vary depending on one's needs. Hence a target set can represent many things. For example, if the target set $T$ is a subset of $\sphere^2$, then a possible correspondence can be $\frac{y(m)}{|y(m)|}\in T$; see \cite{para}. Physically, in this case, $T$ can be considered as a set of directions for rays of light. If $T$ is a subset $\R^3\setminus \{\origin\}$, then for an example of another possible correspondence, we can say that for every $m\in D$, there exists an $a(m)>0$ such that $a(m)y(m)+m\rho(m)\in T;$ see \cite{Schruben:72} and \cite{OK1}. Physically, in this case, $T$ can be considered as a region that one wants to illuminate.

Assume that $g$ is an integrable and nonnegative function over an aperture $D$, and $f$ is an integrable and nonnegative function over a target set $T$. Physically speaking, we say $g(m)$ for $m\in D$ is the radiance of the source at $\origin$ in the directions $m\in D$, or that $g$ is a radiance distribution over $D$. We also say $f(x)$ for $x\in T$ is the irradiance of the target set at $x\in T$, or that $f$ is an irradiance distribution over $T$.

A {\it reflector system} comprises of an aperture $D$, $\origin$, a reflector $R$, an integrable and nonnegative function $g$ over $D$, and a target set $T$ with an integrable and nonnegative function $f$ over $T$. From a physical perspective: light emitted from the source at $\origin$ in directions defined by the aperture $D$, of radiance $g(m)$ for $m\in D$, is reflected off $R$, creating the irradiance $f(x)$ for $x\in T$. An example that can serve as an illustration is shown in Figure \ref{figure_plane-reflector}.

\begin{figure}[htbp]
\centering
\label{figure_plane-reflector}
\def\svgwidth{0.9\textwidth}
\begingroup%
  \makeatletter%
  \providecommand\color[2][]{%
    \errmessage{(Inkscape) Color is used for the text in Inkscape, but the package 'color.sty' is not loaded}%
    \renewcommand\color[2][]{}%
  }%
  \providecommand\transparent[1]{%
    \errmessage{(Inkscape) Transparency is used (non-zero) for the text in Inkscape, but the package 'transparent.sty' is not loaded}%
    \renewcommand\transparent[1]{}%
  }%
  \providecommand\rotatebox[2]{#2}%
  \newcommand*\fsize{\dimexpr\f@size pt\relax}%
  \newcommand*\lineheight[1]{\fontsize{\fsize}{#1\fsize}\selectfont}%
  \ifx\svgwidth\undefined%
    \setlength{\unitlength}{595.27559055bp}%
    \ifx\svgscale\undefined%
      \relax%
    \else%
      \setlength{\unitlength}{\unitlength * \real{\svgscale}}%
    \fi%
  \else%
    \setlength{\unitlength}{\svgwidth}%
  \fi%
  \global\let\svgwidth\undefined%
  \global\let\svgscale\undefined%
  \makeatother%
  \begin{picture}(1,1.41428571)%
    \lineheight{1}%
    \setlength\tabcolsep{0pt}%
    \put(0,0){\includegraphics[width=\unitlength,page=1]{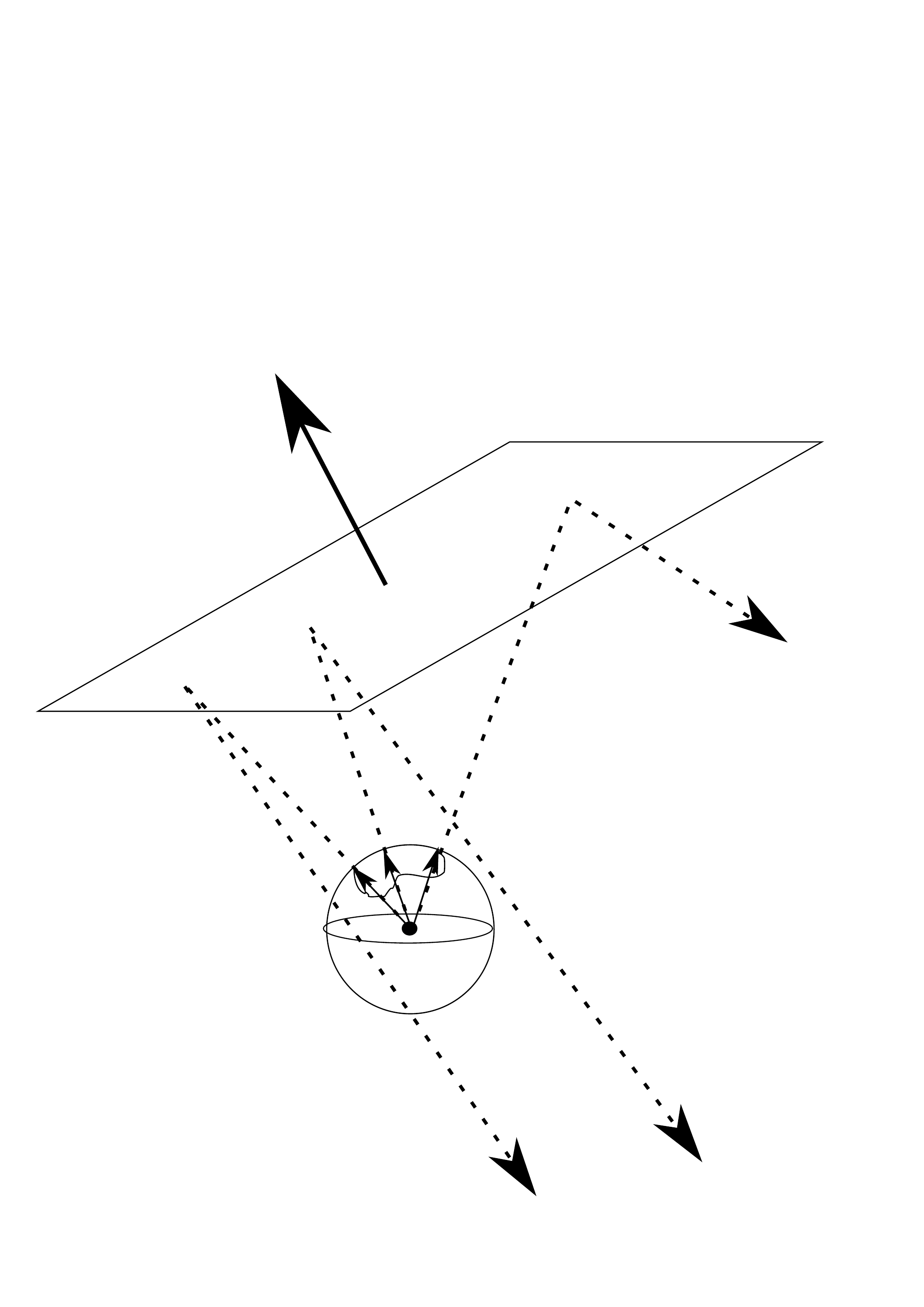}}%
    \put(0.45357143,0.37621402){\color[rgb]{0,0,0}\makebox(0,0)[lt]{\lineheight{1.25}\smash{\begin{tabular}[t]{l}$\mathcal{O}$\end{tabular}}}}%
    \put(0.71759324,0.91266716){\color[rgb]{0,0,0}\makebox(0,0)[lt]{\lineheight{1.25}\smash{\begin{tabular}[t]{l}The plane reflector $R$\end{tabular}}}}%
    \put(0.05051017,0.83428922){\color[rgb]{0,0,0}\makebox(0,0)[lt]{\lineheight{1.25}\smash{\begin{tabular}[t]{l}The surface normal to $R$\end{tabular}}}}%
    \put(0.25603447,0.20552434){\color[rgb]{0,0,0}\makebox(0,0)[lt]{\lineheight{1.25}\smash{\begin{tabular}[t]{l}Light rays going to some target set $T$\end{tabular}}}}%
  \end{picture}%
\endgroup%

\caption[Reflector System with a Planar Reflector Illustration]{Here is the most basic example of a reflector system with a smooth reflector. Here $R$ is a plane. Every point on $R$ has a normal. Light originates from the point $\origin$ with directions represented by points on the unit sphere $\sphere^2$ and travels according to some target set that is neither shown nor specified.}
\end{figure}

A {\it reflector problem} is, in short, an inverse problem that seeks to complete a reflector system by creating a reflector that fits the other information given. Specifically, suppose we are given $\origin$, an aperture $D$, an integrable and nonnegative function $g$ over $D$, and a target set $T$ with an integrable and nonnegative function $f$ over $T$. The aim of a reflector problem is to find a continuous, almost everywhere differentiable, positive $\rho$ over $D$ such that the reflector $R=\{m\rho(m)|m\in D\}$ produces the specified in advance irradiance distribution $f$ on $T$.

Reflector problems have been well studied due to their utility in physics and engineering. Such problems have found numerous applications in the construction of reflector antennas (see \cite{XuWang}, \cite{kinber}), mirror design \cite{BurkShealy}, heat transfer \cite{HorMc}, and beam shaping \cite{10.1117/12.897108}. We only consider in the high-frequency approximation of light, where the laws of geometric optics apply. We now proceed with a general description and motivation for the near-field reflector problem.

\section{The Near-Field Reflector Problem}\label{nearfield-formulation}

We discuss a reflector problem that we call the `near-field reflector problem.' In short, the near-field reflector problem aims to design a reflector that redistributes the light from the origin onto a set a finite distance away from the origin. 

In this part, when we say {\it surface}, we mean it in the differential geometric sense; see Definition 12.4 in \cite{diffgeo}. Suppose that we are given a reflector system consisting of 
\begin{enumerate}
    \item $\origin$,
    \item an aperture $D\subset \sphere^2$,
    \item a nonnegative $g\in L^1(D)$,
    \item a bounded Borel set $T\subset \R^3\setminus\{\origin\}$ (typically either a subset of a surface or a finite set),
    \item a nonnegative and integrable function $f:T\to [0,\infty)$,
    \item and a smooth function $\rho:D\to(0,\infty)$ with a smooth reflector $R=\{m\rho(m)|m\in D\}.$ 
\end{enumerate}
From a physical perspective, this setup can be described as follows. The light is emitted from the source at $\origin$ in directions defined by the aperture $D$. Each ray of direction $m\in D$ has radiance $g(m)$ and is reflected off $R$ at the point $m\rho(m)$ in the direction $y(m)$ as described by (\ref{geo_reflect}). For every $m\in D$, there exists an $a(m)>0$ such that $a(m)y(m)+m\rho(m)\in T$ creating the irradiance $f(x)$ for $x\in T$. A basic illustration of this situation is depicted in Figure \ref{figure_near-field}. With this setup in mind, we proceed with a formulation of the near-field reflector problem

\begin{figure}[htbp]
\centering
\label{figure_near-field}
\def\svgwidth{0.9\textwidth}
\begingroup%
  \makeatletter%
  \providecommand\color[2][]{%
    \errmessage{(Inkscape) Color is used for the text in Inkscape, but the package 'color.sty' is not loaded}%
    \renewcommand\color[2][]{}%
  }%
  \providecommand\transparent[1]{%
    \errmessage{(Inkscape) Transparency is used (non-zero) for the text in Inkscape, but the package 'transparent.sty' is not loaded}%
    \renewcommand\transparent[1]{}%
  }%
  \providecommand\rotatebox[2]{#2}%
  \newcommand*\fsize{\dimexpr\f@size pt\relax}%
  \newcommand*\lineheight[1]{\fontsize{\fsize}{#1\fsize}\selectfont}%
  \ifx\svgwidth\undefined%
    \setlength{\unitlength}{595.27559055bp}%
    \ifx\svgscale\undefined%
      \relax%
    \else%
      \setlength{\unitlength}{\unitlength * \real{\svgscale}}%
    \fi%
  \else%
    \setlength{\unitlength}{\svgwidth}%
  \fi%
  \global\let\svgwidth\undefined%
  \global\let\svgscale\undefined%
  \makeatother%
  \begin{picture}(1,1.41428571)%
    \lineheight{1}%
    \setlength\tabcolsep{0pt}%
    \put(0,0){\includegraphics[width=\unitlength,page=1]{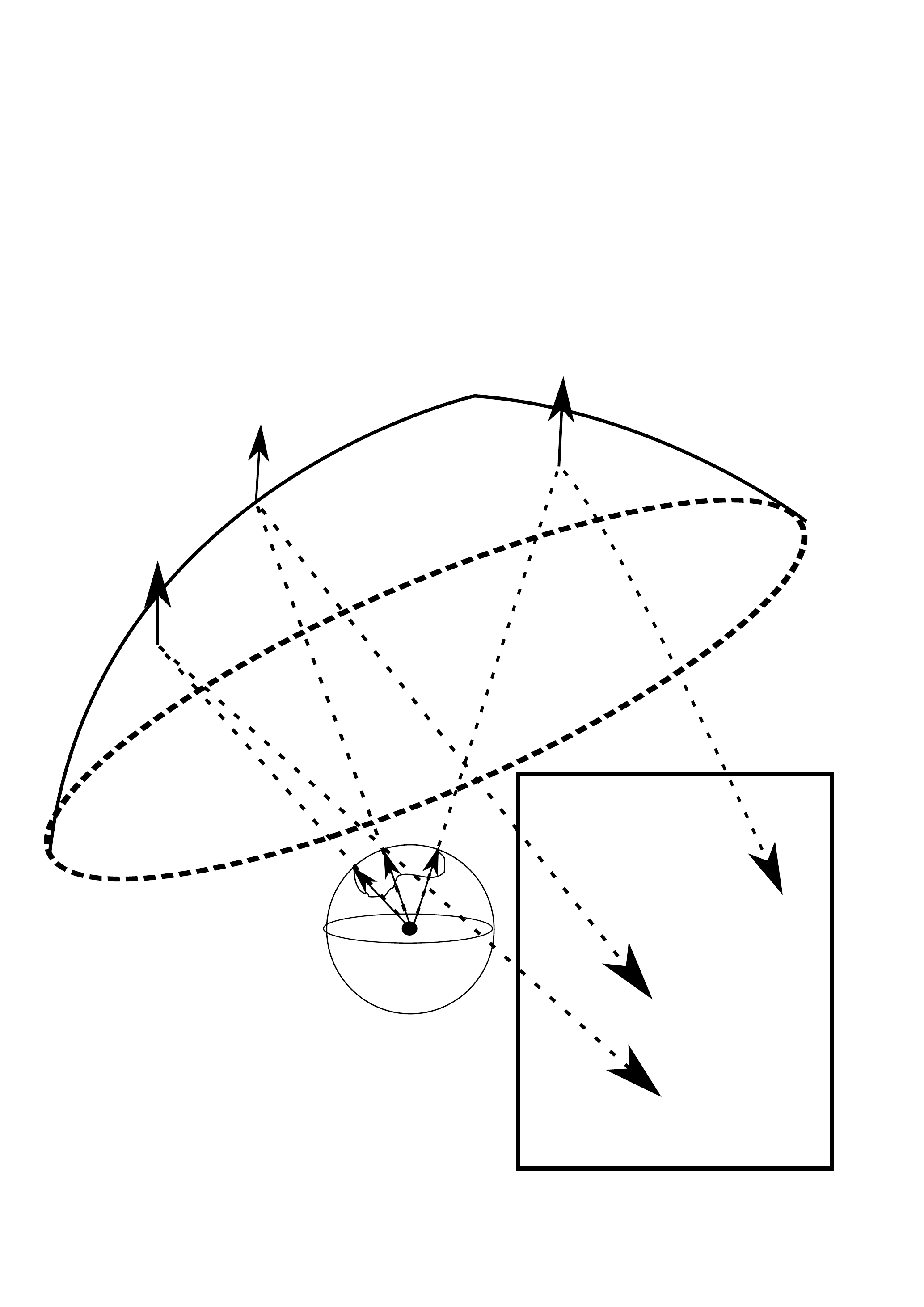}}%
    \put(0.40408162,0.37795577){\color[rgb]{0,0,0}\makebox(0,0)[lt]{\lineheight{1.25}\smash{\begin{tabular}[t]{l}$\mathcal{O}$\end{tabular}}}}%
    \put(0.07489444,0.88305773){\color[rgb]{0,0,0}\makebox(0,0)[lt]{\lineheight{1.25}\smash{\begin{tabular}[t]{l}Reflector $R$\end{tabular}}}}%
    \put(0.39537299,0.11495419){\color[rgb]{0,0,0}\makebox(0,0)[lt]{\lineheight{1.25}\smash{\begin{tabular}[t]{l}Target set $T$ with irradiance distribution $f$\end{tabular}}}}%
    \put(0.32222025,0.85867346){\color[rgb]{0,0,0}\makebox(0,0)[lt]{\lineheight{1.25}\smash{\begin{tabular}[t]{l}Surface normals to $R$\end{tabular}}}}%
    \put(0.40930684,0.63224843){\color[rgb]{0,0,0}\makebox(0,0)[lt]{\lineheight{1.25}\smash{\begin{tabular}[t]{l}Light Rays\end{tabular}}}}%
  \end{picture}%
\endgroup%

\caption[Near-Field Reflector Illustration]{Here is an illustration of the near-field reflector problem in $\R^3$. The radiation intensity at the origin $\origin$ is given by a nonnegative function $g\in L^1(D)$. We want to find a reflector $R$ such that the reflected rays produce the prescribed irradiance distribution $f$ on $T.$}
\end{figure}

Let $u=(u^1,u^2)$ be smooth local coordinates on $\sphere^2$ such that $D$ lies in one coordinate patch. The position vector of a point $m \in D$ is $m = m(u)$. We choose the coordinates $u^1, u^2$ so that $\langle m, m_1 \times m_2\rangle =1$ in $D$; here, $\langle,\rangle$ denotes the scalar product in $\R^3$ and $m_i = \frac{\partial m}{\partial u^i}, i = 1,2.$ Observe that this implies that  $\langle m,m_i \rangle=0$, $i=1,2.$ The first fundamental form of $\sphere^2$ is given by $e = e_{ij} du^i du^j$ where $e_{ij} = \langle m_i , m_j\rangle.$ 

Set $r(m)=m\rho(m)$, then $r(m)$ defines a smooth surface $R=\{r(m)|m\in D\}$. Let $g=g_{ij}du^idu^j$ be the first fundamental form of $R$ where $g_{ij}=\langle r_i,r_j\rangle=\rho_i\rho_j+\rho^2e_{ij}$, $r_i=\frac{\partial r}{\partial u_i}$, and $\rho_i=\frac{\partial \rho}{\partial u_i}$.

Let $n(m)$ is the normal vector field on $R$ such that $\langle n(m),m\rangle>0$ everywhere on $R.$ Then
\begin{equation}\label{norm-near-field}
    n(m)=(\rho^2+|\tilde{\nabla}\rho|^2)^{-1/2}(r-\tilde{\nabla}\rho)
\end{equation}
where $|\tilde{\nabla}p|^2=\rho_i\rho_je^{ij}.$
This combined with equation (\ref{geo_reflect}) determines the direction a ray will go after reflecting off $R$ \cite{OLIKER199391}. 

We can now track the path of each ray described by the direction $m\in D$ to a point $x(m)\in T$. A ray, originating at $\origin$ in direction $m$, hits the surface $R$ at a point $r(m)$. Then, said ray reflects off $R$ at $r(m)$ in the direction $y(m)$ as defined by (\ref{geo_reflect}) and reaches $T$ at some point $x(m)$. Thus, from a physical perspective, an irradiance $f(x(m))$ is created by the rays reflected at $x(m)$. This defines a mapping $m\to x$ that we call a {\it reflector map}; for convenience, we denote $x(m)$ as the image of $m$ under the reflector map. The reflector map $x:D\to T$ combined with equations (\ref{geo_reflect}) and (\ref{norm-near-field}) describes the ray tracing from $D$ to $T$.

If the reflector map is a diffeomorphism from $D$ to $T$ where $T$ is a subset of a smooth surface, then one can introduce the first fundamental form of $T$ as $w =w_{ij} du^i du^j ,$ where $w_{ij} = \langle x_i ,x_j\rangle, x_i = \frac{\partial x}{\partial u^i}.$

According to the differential form of the energy conservation law \cite{born_wolf_2019}, 
\begin{equation}\label{diff-near-field-nrg}
    f(x(m)) |J(x(m))| = g(m)
\end{equation}
where $J$ is the Jacobian determinant of the map $x$. Note that
\begin{equation}
    J(x(m))=\pm\frac{d\nu(x(m))}{d\sigma(m)}=\pm\frac{\sqrt{\det(w_{ij})}}{\sqrt{\det(e_{ij})}}
\end{equation}
where $d\sigma$ is the surface area element on $\sphere^2$, and $d\nu$ is the surface area element on $T$. We assign a $\pm$ sign to the Jacobian according to whether $x$ preserves the orientation or reverses it. Therefore, by integration of (\ref{diff-near-field-nrg}), for all Borel sets $\omega \subseteq T$,
\begin{equation}
    \int_{x^{-1}[\omega]} g d\sigma= \int_{\omega} f d\nu
\end{equation}
where $x^{-1}[\omega]=\{m\in D|x(m)\in\omega\}$ and $\int_Dg d\sigma=\int_T fd\nu$. 

With this motivation, we can now state the near-field reflector problem. Assume that we are given $\origin$, an aperture $D\subset \sphere^2$ with a nonnegative function $g\in L^1(D)$, and a bounded Borel set $T\subset \R^3\setminus\{\origin\}$ with a nonnegative, integrable function $f:T\to [0,\infty)$. The goal is to find a smooth function $\rho$ over $D$ such that:
\begin{enumerate}
    \item The ray originating from $\origin$ in the direction $m\in D$ reflects off the reflector $R=\{m\rho(m)|m\in D\}$ in accordance with equation (\ref{geo_reflect}) and reaches the target set $T$.
    
    \item $g(m)$ on $D$ is transformed by the reflector map into $f$ on $T$; i.e. for all Borel subsets $\omega \subseteq T$,
\begin{equation}\label{nrg-conserv1}
    \int_{x^{-1}[\omega]} g d\sigma= \int_{\omega} f d\nu
\end{equation}
where $x: D\to T$ the reflector map corresponding to the reflector $R=\{m\rho(m)|m\in D\}$, $x^{-1}[\omega]=\{m\in D|x(m)\in\omega\}$, $d\sigma$ is the surface area element on $\sphere^2$, and $d\nu$ is the area element on $T$ ($\nu$ is typically some discrete or Lebesgue measure).

\item The law of total energy conservation is obeyed: $\int_Dg d\sigma=\int_T fd\nu$.
\end{enumerate}

The case where the reflector map is a diffeomorphism from $D$ to $T$ can be alternatively formulated as a PDE of Monge-Amp\`ere type; specifically equation (4) from \cite{Oliker_1989}.

There has been a lot of work done on the near-field reflector problem. In 1972, Schruben \cite{Schruben:72} found that if the target set was a subset of a plane in $\R^3$, one can then derive an implicit integro-differential equation describing the reflector; the existence of a solution was not proved. Then in \cite{Schruben:74}, Schruben considered the case where the target set was a small rotationally symmetric patch on the plane. In this case, when the radiance and the irradiance distributions are rotationally symmetric, the equation derived in \cite{Schruben:72} can be solved as an ODE. In 1989, Oliker \cite{Oliker_1989} found a formulation of the near-field reflector problem in the form of a strongly non-linear PDE of Monge-Amp\`ere type. The exploration of the said equation is difficult and in \cite{Oliker_1989} was solved only for the rotationally symmetric case. 

In 1998, Kochengin and Oliker \cite{OK1} introduced an alternative formulation to the near-field reflector problem, which was a geometric approach involving the analysis of the boundaries of convex sets generated by families of supporting ellipsoids. This approach can also be considered a weak solution to the PDE introduced in \cite{Oliker_1989}. The strategy was to assume that the target set was a finite set on a plane and constructively prove the existence of solutions for that case. Since the reflectors that were constructed were convex, one can use the Blaschke selection theorem (for more details, see \cite{schneider_2013}) to prove the existence of a solution with a continuous target set on the plane. This method was largely motivated by previous work done by Caffarelli and Oliker \cite{para} which involved the analysis of the boundaries of convex sets generated by families of supporting paraboloids to solve a related problem. 

In \cite{OK2} a provably convergent numerical algorithm was introduced that explicitly finds the ellipsoids required to construct the reflectors described in \cite{OK1}. It was shown that this construction leads to infinitely many solutions; however, the algorithm has the benefit of converging to a unique solution if we fix an initial point on the reflector. This algorithm and its variations have been explored extensively in various scenarios. For example, Fournier, Cassarly, and Rolland in \cite{fournier} adapted the algorithm in \cite{OK2}, to situations where the light source is not a single point; specifically, a flat rotationally symmetric emitter. In \cite{Fournier:10} a method was proposed for smoothing out a reflector with a discrete irradiance distribution to a reflector with a continuous irradiance distribution. Optimal transport methods have also been studied \cite{Graf2012AnOM}. 

\subsection{The Near-Field Reflector Problem with Spatial Restrictions}

In this paper, we study a novel variant of the near-field reflector problem where we have extreme limitations on where we can place and construct the reflectors. Specifically, we are given an open set $U\subset \R^3\setminus\{\origin\}$, and our reflector $R$ must now be a subset of $\overline{U}$. 

\begin{Def}\label{proj-def}
Given an $x\in\R^3\setminus\{\origin\}$ and a subset $S\subseteq \R^3\setminus\{\origin\}$, then we define $\Proj(x)=\frac{x}{|x|}$ as {\bf the projection of $x$ onto $\sphere^2$} and $\Proj[S]=\{\Proj(x)\in \sphere^2|x\in S\}$ as {\bf the projection of $S$ onto $\sphere^2$}.
\end{Def}

Assume that we are given a positive, continuous, almost everywhere differentiable function $\rho$ over $\Proj[U]$. We have a reflector $R=\{m\rho(m)|m\in \Proj[U]\}$ which determines our reflector map $x: \Proj[U]\to T$ which is determined by tracking the path of each ray described by the direction $m\in \Proj[U]$ to a point $x(m)\in T$. A ray, originating at $\origin$ in direction $m$, hits the reflector $R$ at a point $m\rho(m)$. Then, assuming $\rho$ is differentiable at $m$, said ray reflects off $R$ at $m\rho(m)$ in the direction $y(m)$ as defined by (\ref{geo_reflect}) and reaches $T$ at some point $x(m)$. Thus, from a physical perspective, an irradiance $f(x(m))$ is created by the rays reflected at $x(m)$. This defines a mapping $m \to x(m)$ that we call the {\it reflector map}; for convenience, we denote $x(m)$ as the image of $m$ under the reflector map.

We can now formulate the near-field reflector problem with spatial restrictions. Assume that we are given an open set $U\subset \R^3\setminus\{\origin\}$, $\origin$, an aperture $\Proj[U]\subset \sphere^2$, a nonnegative $g\in L^1(\Proj[U])$, and a bounded Borel set $T\subset \R^3\setminus\{\origin\}$ with an integrable function $f:T\to [0,\infty)$. 

The goal is to find a positive, continuous, almost everywhere differentiable function $\rho$ over $\Proj[U]$ such that:
\begin{enumerate}
\item $R=\{m\rho(m)|m\in\Proj[U]\}\subset \overline{U}$.
    \item The ray originating from $\origin$ in the direction $m\in \Proj[U]$ reflects off of $R$ in accordance with equation (\ref{geo_reflect}) and reaches the target set $T$.
    
    \item $g(m)$ on $\Proj[U]$ is transformed by the reflector map into $f$ on $T$, i.e. for all Borel subsets $\omega \subseteq T$,
\begin{equation}\label{nrg-conserv1a}
    \int_{x^{-1}[\omega]} g d\sigma= \int_{\omega} f d\nu
\end{equation}
where $x:\Proj[U]\to T$ is the reflector map, $x^{-1}[\omega]=\{m\in \Proj[U]|x(m)\in\omega\}$, $d\sigma$ is the surface area element on $\sphere^2$, and $d\nu$ is the area element on $T$ ($\nu$ is typically some discrete or Lebesgue measure). 

\item The law of total energy conservation is obeyed: $\int_{\Proj[U]}g d\sigma=\int_T fd\nu$.
\end{enumerate}

This variation of the near-field reflector problem has clear applications to engineering; as often one has to grapple with restrictions of space in real-world designs. For example, in the construction of automotive headlights, there are strict restrictions, guided purely by aesthetics, as to where a reflector can be placed and how a reflector must be shaped \cite{FlorianDiss}. However, to the author's knowledge, no mathematical research has been done in this direction. We focus exclusively on the case where the target set is finite. 

\section{Ellipsoids of Revolution}

We do all our work in $\R^3$. We denote $\sphere^2$ to be the unit sphere with the center at $\origin$ and $k_x=x/|x|$ for all $x\in \R^3\setminus\{\origin\}.$ We borrow much of this geometric setup from \cite{OK1} and \cite{OK2}. Ellipsoids of revolution are of paramount importance when solving the near-field reflector problem due to their unique optical properties.

Let $x\in \R^3\setminus\{\origin\}$ and $d\in(0,\infty)$. We denote by $E_d(x)$ an ellipsoid of revolution about the axis $\origin x$ and with foci at points $\origin$ and $x$. The polar radius relative to $\origin$ can be represented as:
\begin{equation}
    \psi_{x,d}(m)=\frac{d}{1-\epsilon \langle m,k_x \rangle}, \; m \in \sphere^2
\end{equation}
where $\epsilon$ is the eccentricity and 
\begin{equation}
    \epsilon = \sqrt{1+\frac{d^2}{x^2}}-\frac{d}{|x|}.
\end{equation}
So in other words
\begin{equation}
    E_d(x)=\{m\psi_{x,d}(m)|m\in \sphere^2\}.
\end{equation}
From this point on, whenever we use the term \textit{ellipsoid} we specifically refer to an ellipsoid of this kind with one of the foci always at $\origin$. Note that each $E_d(x)$ is uniquely defined by the $x\in \R^3\setminus\{\origin\}$ and the $d\in (0,\infty).$ In this paper, we define $\Psi_{x,d}(m)=m\psi_{x,d}(m)$. 

Note that for all possible values of $d$, we have that $\epsilon\in(0,1)$. Also for a fixed $x$, as $d\to 0$ the ellipsoid will degenerate into a line segment, i.e. $E_d(x)\to\{tx+(1-t)\origin|t\in[0,1]\}.$ Such an ellipsoid is called {\it degenerate}. Observe that as $d\to \infty$, $|\psi_{x,d}(m)|\to\infty$ for all $m\in\sphere^2.$ 

An important property of ellipsoids can be described by the following proposition. 

\begin{prop}
Let $c,d>0$. Then the ellipsoids $E_{cd}(x)$ and $E_{d}(x)$ have the same foci: $\origin$ and $x.$
\end{prop}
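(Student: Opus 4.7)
The plan is to reduce the claim to the classical polar description of an ellipse with a focus at the origin and to verify that the parameter $d$ fixes only the semi-latus rectum, not the positions of the foci.

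First, I would recall that a conic written in polar form $r(\theta)=p/(1-\epsilon\cos\theta)$ with $\epsilon\in(0,1)$ is an ellipse with one focus at the origin, semi-latus rectum $p$, semi-major axis $a=p/(1-\epsilon^2)$, and second focus lying along the ray $\theta=0$ at distance $2\epsilon a$ from the origin. In our setting the formula $\psi_{x,d}(m)=d/(1-\epsilon\langle m,k_x\rangle)$ matches this polar form with $p=d$ and with $\theta$ being the angle between $m$ and $k_x$, so that $\theta=0$ corresponds exactly to the direction $k_x$ pointing from $\origin$ toward $x$.

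The key computation is to show that, although both $\epsilon$ and $a$ depend on $d$, the focal distance $2\epsilon a$ simplifies to $|x|$ independently of $d$. Starting from $\epsilon=\sqrt{1+d^2/|x|^2}-d/|x|$, I would isolate the square root, square, and cancel the $d^2/|x|^2$ terms to obtain $1-\epsilon^2=2\epsilon d/|x|$. This immediately yields $a=d/(1-\epsilon^2)=|x|/(2\epsilon)$, so the focal distance is $2\epsilon a=|x|$. Consequently, the second focus of $E_d(x)$ lies at distance $|x|$ from $\origin$ in the direction $k_x$, namely at the point $|x|\,k_x=x$, while the first focus is at $\origin$ by construction.

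Finally, since this derivation produces focal distance $|x|$ for every admissible positive value of the parameter, the identical argument with $d$ replaced by $cd$ shows that $E_{cd}(x)$ has foci $\origin$ and $x$ as well. I do not anticipate any real obstacle here: the only subtle point is confirming that the simplification to $2\epsilon a=|x|$ really is independent of $d$, and that collapse is immediate once the expression for $\epsilon$ is squared.
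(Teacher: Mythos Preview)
Your argument is correct, but note that the paper does not actually supply a proof of this proposition: it is stated without proof and is essentially immediate from the paper's own definition, which introduces $E_d(x)$ as ``an ellipsoid of revolution about the axis $\origin x$ and with foci at points $\origin$ and $x$.'' From that definition the conclusion is tautological for any positive parameter value, including $cd$.

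What you have written is therefore more than the paper requires: you are effectively verifying that the explicit polar formula $\psi_{x,d}(m)=d/(1-\epsilon\langle m,k_x\rangle)$ with the given expression for $\epsilon$ is consistent with that definition, by computing the focal distance $2\epsilon a$ and checking it equals $|x|$. That computation is clean and correct (the identity $1-\epsilon^2=2\epsilon d/|x|$ is exactly the right simplification), so your proposal is a valid self-contained proof, just one that does strictly more work than the paper, which treats the foci as part of the definition rather than as something to be derived.
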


From a physical perspective, the aforementioned property is important because a reflector that is shaped like an ellipsoid $E_d(x)$ will illuminate the focus $x$ with the light emitted from $\origin$ such that the total energy emitted from $\origin$ is equal to the total energy reflected onto $x$. This property is still true no matter how large or small the ellipsoid is; all that matters is the location of the foci.

\section{Generalized Reflectors}
Before we proceed, we reiterate that the near-field reflector problem can be expressed analytically as a PDE of Monge Amp\'ere Type. Specifically, the equation (4) from \cite{Oliker_1989}. Therefore we will consider the following formulation of the near-field reflector problem with spatial restrictions as a weak formulation and its solutions, weak solutions. The following formulation only concerns the case where the target set is finite.

\subsection{Weak Solutions Using Generalized Reflectors}

\begin{Def}\label{genralizedref}
    Assume that we are given an aperture $D\subseteq \sphere^2$ that is an open set, and a function $\rho:D \to (0,\infty)$ that is not necessarily continuous and almost everywhere differentiable. Then a {\bf generalized reflector} is the set $R=\{m\rho(m)|m\in D\}\subset \R^3$.
\end{Def}

The upper half-space of $\R^3$ be represented as $\R^{3+}=\{(x,y,z)\in \R^3|z>0\}$, and the lower half-space of $\R^3$ be represented as $\R^{3-}=\{(x,y,z)\in \R^3|z<0\}$. Let $\sigma$ denote the standard measure on $\mathbb{S}^2.$ Consider an open set $U\subseteq \R^{3+}$, a corresponding aperture $\Proj[U]$, and a finite target set $T\subset \R^{3-}$. 

Let $\mathscr{B}$ be a countable family of open subsets of $\sphere^2$ such that $\sigma(\overline{\Proj[U]}\setminus\bigcup_{B\in \mathscr{B}}\overline{B})=0$, $\Proj[U]\subseteq\bigcup_{B\in \mathscr{B}}\overline{B}$, and $\sigma(\overline{B}\cap \overline{B'})=0$ for all distinct $B,B'\in \mathscr{B}$. Let the set $\mathcal{B}(U)$ be the set of all such families.

Since every ellipsoid requires foci and an eccentricity to be well defined, given a family $\mathscr{B}\in \mathcal{B}(U)$, let $\mathscr{U}_T(\mathscr{B})$ be the set of all functions $\mathscr{B}\to T$ and $\mathscr{V}(\mathscr{B})$ be the set of all functions $\mathscr{B}\to (0,\infty)$. Thus we define
\begin{equation}
    \mathcal{E}_T(U)=\left\{\left.\bigcup_{B\in \mathscr{B}}\Psi_{u(B),v(B)}[\overline{B}]\right| \mathscr{B}\in \mathcal{B}(U),u\in\mathscr{U}_T(\mathscr{B}),v\in\mathscr{V}(\mathscr{B})\right\}.
\end{equation}

Assume we are given a $Z\in \mathcal{E}_T(U)$. Let us define 
\begin{equation}
    \mathscr{B}_Z=\left\{\Int(\Proj[E_d(x)\cap Z])\subseteq \sphere^2\left|d\in(0,\infty),x\in T, \sigma(\Proj[E_d(x)\cap Z])\neq 0\right.\right\}.
\end{equation}
The geometry of the ellipsoid and the definition of $\mathscr{B}_Z$ imply that there exists unique $u\in\mathscr{U}_T(\mathscr{B})$ and $v\in\mathscr{V}(\mathscr{B})$ such that $Z=\bigcup_{B\in \mathscr{B}_Z}\Psi_{u(B),v(B)}[\overline{B}]$. Define $u_Z\in \mathscr{U}_T(\mathscr{B}_Z)$ and $v_Z\in \mathscr{V}(\mathscr{B}_Z)$ be the unique functions such that  $Z=\bigcup_{B\in \mathscr{B}_Z}\Psi_{u_Z(B),v_Z(B)}[\overline{B}]$. Given some $Z\in \mathcal{E}_T(U)$, let $y_Z^1(m)=\{B\in\mathscr{B}_Z|m\in \overline{B}\}$ for $m\in \Proj[U]$. Given a $\mathscr{B}\in \mathcal{B}(U)$, let $\mathscr{N}(\mathscr{B})$ be the set of all injective functions $s:\mathscr{B}\to \mathbb{N}$. For $Z\in \mathcal{E}_T(U)$ and $s\in\mathscr{N}(\mathscr{B}_Z)$, define
\begin{equation}
    \rho_Z^s(m)= \psi_{u_Z(s^{-1}(\min s[y_Z^1(m)])),v_Z(s^{-1}(\min s[y_Z^1(m)]))}(m) , \; m \in \Proj[U].
\end{equation}
Observe that the function $\rho_Z^s$ is positive, not necessarily continuous, and almost everywhere differentiable. Let $W(\rho_Z^s)=\{m\rho_Z^s(m)|m\in\Proj[U]\}$ and thus we describe a set of generalized reflectors
\begin{equation}\label{gen-ref-set}
    \mathcal{R}_1^U(T)=\left\{\left.W(\rho_Z^s)\right|Z\in \mathcal{E}_T(U)\text{ where }Z\subset \overline{U}, s\in\mathscr{N}(\mathscr{B}_Z) \right\}.
\end{equation}

Assume we are given a generalized reflector $R\in \mathcal{R}_1^U(T)$. Let us define 
\begin{equation}
    \mathscr{B}_R=\left\{\Int(\Proj[E_d(x)\cap R])\subseteq \sphere^2\left|d\in(0,\infty),x\in T, \sigma(\Proj[E_d(x)\cap R])\neq 0\right.\right\}.
\end{equation}
The geometry of the ellipsoid and the definition of $\mathscr{B}_R$ imply that there exists an $s\in\mathcal{N}(\mathscr{B}_R)$, unique $u\in\mathscr{U}_T(\mathscr{B})$ and unique $v\in\mathscr{V}(\mathscr{B})$ such that $W (\rho_Z^s)=R$ where $Z=\bigcup_{B\in \mathscr{B}_R}\Psi_{u(B),v(B)}[\overline{B}]$. Therefore, for every generalized reflector $R\in \mathcal{R}_1^U(T)$, we may define a unique $\mathscr{B}_R\in \mathcal{B}(U)$ such that for each $B\in \mathscr{B}_R$ there are unique $x_B\in T$ and $d_B\in(0,\infty)$ such that, for some $s\in\mathcal{N}(\mathscr{B}_R)$, $R=W\left(\rho_{Z}^s\right)$ where $Z=\bigcup_{B\in \mathscr{B}_R}\Psi_{x_B,d_{B}}[\overline{B}]$. 

Therefore, given a generalized reflector $R\in \mathcal{R}_1^U(T)$, we obtain a corresponding $\mathscr{B}_R$; for each $B\in\mathscr{B}_R$ we define unique $x_B$ and $d_B$. We also obtain an $s_R\in \mathcal{N}(\mathscr{B}_R)$ and a unique $Z_R=\bigcup_{B\in \mathscr{B}_R}\Psi_{x_B,d_{B}}[\overline{B}]$ such that $R=W(\rho_{Z_R}^{s_R})$.

Given a generalized reflector $R\in \mathcal{R}_1^U(T)$, for all $m \in \Proj[U]$ we define
\begin{equation}
    M(m)=x_B\in T
\end{equation}
where $m\rho_{Z_R}^{s_R}(m)=\Psi_{x_B,d_{B}}(m).$ Let $y_R^2(m)$ be the points of intersection between $R\setminus\{m\rho_{Z_R}^{s_R}(m)\}$ and the line segment connecting $m\rho_{Z_R}^{s_R}(m)$ to $M(m)$. 

 Given a generalized reflector $R\in \mathcal{R}_1^U(T)$, the map $\alpha_1:\Proj[U] \to T\cup R,$
\begin{equation}
    \alpha_1(m)=
\begin{cases}

M(m) &\textnormal{ if } y_R^2(m)= \varnothing\\
y_R^2(m) & \textnormal{ if } y_R^2(m)\neq \varnothing
\end{cases}
\end{equation}
is called the \textit{generalized reflector map}.  Physically speaking, a ray of light of direction $m$ originating from $\origin$ can only reach the target set if $y_R^2(m)$ is empty.

Assume we are given a nonnegative $g\in L^1(\mathbb{S}^2)$. Let us define for all Borel $X\subseteq \sphere^2$
\begin{equation}\label{s-meas2}
    \mu_{g}(X)=\int_X g(m) d\sigma(m)
\end{equation}
where $\sigma$ denotes the standard measure on $\mathbb{S}^2.$ Assume that $g\equiv 0$ outside of $\Proj[U]$. Physically speaking, $g$ is the radiance distribution of the source at $\origin$. 

In order to formulate and solve the generalized reflector problem (in the framework of weak solutions to be defined below), we need to define a measure representing the energy generated by $g$ and redistributed by a generalized reflector $R \in  \mathcal{R}_1^U(T ).$

Given a generalized reflector $R\in \mathcal{R}_1^U(T)$ and a set $\omega \subseteq T$ we define the {\it visibility set of $\omega$} as 
\begin{equation}
    V_1^U(\omega)= \bigcup_{A\in\mathscr{A}}\overline{A}\setminus \{m\in\Proj[U]|\alpha_1(m)=y_R^2(m)\}
\end{equation}
where $\mathscr{A}=\{B\in \mathscr{B}_R|x_B\in \omega\}$. We now need to show that $V_1^U(\omega)$ is measurable. Note the following definition. 

\begin{Def}\label{cone-def}
For an element $x\in\R^3$ and a set $A\subset\R^3$, let the set $C_{x,A}=\{at+x(1-t)|t\in[0,1],a\in A\}$ be the union of all line segments from $x$ to $A$ and $C_{x,A,\infty}=\{at+x(1-t)|t\in[0,\infty),a\in A\}$ be the union of all rays from $x$ that intersect $A$.
\end{Def}

We proceed with the following lemmas.

\begin{lemma}\label{e-lemma}
Let $w:\sphere^2\to (0,\infty)$ be continuous and $W(m)=mw(m)$ for all $m\in \sphere^2$. If $B$ is a Borel set of $\sphere^2$, then  $C_{\origin,W[B]}$ and $C_{\origin,W[B],\infty}$ are Borel sets of $\R^3.$ 
\end{lemma}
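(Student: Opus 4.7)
The plan is to reduce everything to Borel-measurability on $\sphere^2$ by exploiting the radial homeomorphism $\phi:(0,\infty)\times\sphere^2\to\R^3\setminus\{\origin\}$ defined by $\phi(r,m)=rm$, whose inverse $x\mapsto(|x|,x/|x|)$ is also continuous. Both cones are radial over $B$, so in $(r,m)$ coordinates they admit very simple descriptions, and Borel-measurability in $\R^3$ will follow by transporting across $\phi$. Note that the paper's definition gives $C_{\origin,W[B]}=\{tmw(m):t\in[0,1],\ m\in B\}$ and $C_{\origin,W[B],\infty}=\{tmw(m):t\in[0,\infty),\ m\in B\}$, so the radial structure is manifest.

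First I would establish the set identities
\begin{equation}
C_{\origin,W[B],\infty}\cap(\R^3\setminus\{\origin\})=\phi\bigl((0,\infty)\times B\bigr),
\end{equation}
\begin{equation}
C_{\origin,W[B]}\cap(\R^3\setminus\{\origin\})=\phi\bigl(\{(r,m)\in(0,\infty)\times\sphere^2:m\in B,\ r\le w(m)\}\bigr).
\end{equation}
These come from unwinding the definitions: any nonzero $x=tmw(m)$ with $t>0$ and $m\in B$ satisfies $x/|x|=m\in B$ and $|x|=tw(m)$, while conversely given $x\ne\origin$ with $x/|x|\in B$, setting $m=x/|x|$ and $t=|x|/w(m)$ recovers $x$ with $t>0$, and the constraint $t\le 1$ is equivalent to $|x|\le w(x/|x|)$.

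Once the identities hold, the conclusion is routine. The product $(0,\infty)\times B$ is Borel in $(0,\infty)\times\sphere^2$ by the standard construction of product $\sigma$-algebras, and $\{(r,m):m\in B,\ r\le w(m)\}$ is its intersection with the closed set $\{(r,m):r-w(m)\le 0\}$ (closed because $w$ is continuous), hence also Borel. Applying the homeomorphism $\phi$ carries Borel sets to Borel sets, so the right-hand sides are Borel in $\R^3$, and adjoining the singleton $\{\origin\}$ (which lies in each cone precisely when $B\ne\varnothing$, via $t=0$) preserves Borel-measurability. I do not anticipate any real obstacle: the only delicate points are the bookkeeping at $\origin$ and the degenerate case $B=\varnothing$, and both are handled trivially since singletons are Borel and the empty cone is vacuously so.
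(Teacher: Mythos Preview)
Your proof is correct and takes a genuinely different route from the paper's.

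The paper argues by ``tracking'' the Borel construction of $B$: it asserts one can pick a countable family of open sets $\{E_i\}$ from which $B$ is built by countable union, intersection, and complement, and then applies the identical sequence of operations to the family $\{\Int(C_{\origin,W[E_i]})\}$ in $\R^3$, claiming this yields $\Int(C_{\origin,W[B]})$. The infinite cone is then obtained as a countable union $\bigcup_{n\ge 1} C_{\origin,W_n[B]}$ with $W_n(m)=n\,m\,w(m)$. Your approach instead passes to polar coordinates via the homeomorphism $\phi:(0,\infty)\times\sphere^2\to\R^3\setminus\{\origin\}$, under which the two cones (minus the origin) become $(0,\infty)\times B$ and the subgraph $\{(r,m):m\in B,\ r\le w(m)\}$; both are manifestly Borel, and homeomorphisms preserve Borel sets.

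Your argument is both more elementary and more robust: it uses the continuity of $w$ in exactly one place (to see that $\{r\le w(m)\}$ is closed) and sidesteps the delicate point in the paper's proof that the map $E\mapsto \Int(C_{\origin,W[E]})$ need not commute with the set-theoretic operations generating the Borel $\sigma$-algebra. The paper's version has the minor advantage of suggesting why the infinite cone is a countable union of bounded cones, but that is not needed for the bare measurability claim.
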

\begin{proof}
Recall that all Borel sets can be formed from open sets through the operations of countable union, countable intersection, and relative complement. Let $\{E_i\}$ be a countable collection of open sets of $\sphere^2$ such that through said operations, we obtain $B.$ Then given the countable collection of open sets of $\R^3$, $\{\Int(C_{\origin,W[E_i]})\}$, through the same sequence of operations we used to obtain $B$ from $\{E_i\}$, we obtain $\Int(C_{\origin,W[B]})$. Thus $C_{\origin,W[B]}$ is Borel. Therefore, assuming $W_i\equiv (i m)w(m)$ for all $m\in \sphere^2$ and $i\in(0,\infty)$, $\bigcup_{n=1}^\infty C_{\origin,W_n[B]}=C_{\origin, W[B],\infty}$ is Borel.
\end{proof}

\begin{lemma}\label{e-lemma1}
If $B$ is a Borel set of $\sphere^2$, $x\in \R^3\setminus\{\origin\}$ and $d\in(0,\infty)$, then  $C_{x,\Psi_{x,d}[B]}$ and $C_{x,\Psi_{x,d}[B],\infty}$ are Borel sets of $\R^3.$ 
\end{lemma}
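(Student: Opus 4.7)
The plan is to reduce to Lemma~\ref{e-lemma} via a translation combined with a re-parameterization of the ellipsoid from its second focus. First, I would translate everything by $-x$. Since $y\mapsto y-x$ is a homeomorphism of $\R^3$, Borel sets are preserved, and a direct computation from Definition~\ref{cone-def} gives
\[
C_{x,\Psi_{x,d}[B]} - x = C_{\origin,\,\Psi_{x,d}[B]-x}, \qquad C_{x,\Psi_{x,d}[B],\infty} - x = C_{\origin,\,\Psi_{x,d}[B]-x,\,\infty}.
\]
Hence it suffices to show that $C_{\origin,\,\Psi_{x,d}[B]-x}$ and $C_{\origin,\,\Psi_{x,d}[B]-x,\,\infty}$ are Borel subsets of $\R^3$.

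Next, I would exploit that the translated ellipsoid $E_d(x)-x$ has $\origin$ as one of its foci (the other being $-x$). The standard polar representation of a conic from a focus, applied to this translated ellipsoid with axis pointing toward $-k_x$, yields
\[
E_d(x)-x = \{m'\tilde\psi(m')\mid m'\in\sphere^2\}, \qquad \tilde\psi(m') = \frac{d}{1+\epsilon\langle m',k_x\rangle},
\]
where $\epsilon\in(0,1)$ is the same eccentricity as that of $E_d(x)$; in particular $\tilde\psi$ is continuous and positive on $\sphere^2$. Setting $\tilde W(m') = m'\tilde\psi(m')$, both $\Psi_{x,d}\colon\sphere^2\to E_d(x)$ and $\tilde W\colon\sphere^2\to E_d(x)-x$ are continuous bijections from a compact space into a Hausdorff image, hence homeomorphisms, so they (together with translation) preserve Borel sets. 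Consequently $B' := \tilde W^{-1}[\Psi_{x,d}[B]-x]$ is a Borel subset of $\sphere^2$ satisfying $\Psi_{x,d}[B]-x = \tilde W[B']$.

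Finally, I would apply Lemma~\ref{e-lemma} to the continuous function $w=\tilde\psi$ and the Borel set $B'$, obtaining that both $C_{\origin,\tilde W[B']}$ and $C_{\origin,\tilde W[B'],\infty}$ are Borel. Translating back by $+x$ then yields the lemma for both $C_{x,\Psi_{x,d}[B]}$ and $C_{x,\Psi_{x,d}[B],\infty}$. The only substantive step beyond bookkeeping is verifying the radial parameterization of the translated ellipsoid from its focus $\origin$ with the explicit continuous positive $\tilde\psi$ above; this is classical for conics and uses only that $x$ lies strictly inside $E_d(x)$ (equivalently $\epsilon\in(0,1)$), so I expect it to be routine rather than a real obstacle.
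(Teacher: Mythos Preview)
Your proposal is correct and follows essentially the same route as the paper's proof: translate so that the second focus $x$ moves to $\origin$, parameterize the (translated) ellipsoid radially from that focus by a continuous positive function, pull the Borel set $B$ through the resulting homeomorphisms, apply Lemma~\ref{e-lemma}, and translate back. The only difference is cosmetic: you write down the explicit polar formula $\tilde\psi(m')=\dfrac{d}{1+\epsilon\langle m',k_x\rangle}$ and justify the homeomorphism via compact--Hausdorff, whereas the paper simply asserts the existence of a continuous $w$ with $E_d(x)=\{mw(m)+x\mid m\in\sphere^2\}$.
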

\begin{proof}
Let $\sphere_x^2=\{m+x|m\in \sphere^2\}$ be the set of all unit vectors originating from $x$, i.e the unit sphere centered at $x.$  Since $x$ is another focus of the ellipsoid, there exists a continuous function $w:\sphere^2\to (0,\infty)$ such that $E_d(x)=\{mw(m)+x|m\in \sphere^2\}$. Let $W_x(m)=mw(m)+x$ and let $W(m)=mw(m)$. Note that since $B$ is Borel in $\sphere^2$, $\Psi_{x,d}[B]$ is Borel in $E_d(x)$. Thus $W_x^{-1}[\Psi_{x,d}[B]]$ is Borel in $\sphere^2$. By Lemma \ref{e-lemma}, $C_{\origin,W[W_x^{-1}[\Psi_{x,d}[B]]]}$ and $C_{\origin,W[W_x^{-1}[\Psi_{x,d}[B]]],\infty}$ are Borel sets of $\R^3.$ Thus, by translation, $C_{x,\Psi_{x,d}[B]}$ and $C_{x,\Psi_{x,d}[B],\infty}$ are Borel sets of $\R^3.$ 
\end{proof}

We can now prove the following proposition. 

\begin{prop}
Let $R$ be a generalized reflector in $\mathcal{R}_1^U(T )$. For any  set $\omega \subseteq T$ the visibility set $V_1^U(\omega)$ is Borel.
\end{prop}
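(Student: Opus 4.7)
The plan is to write $V_1^U(\omega) = G \setminus S$ where $G := \bigcup_{A \in \mathscr{A}} \overline{A}$ and $S := \{m \in \Proj[U] : y_R^2(m) \neq \varnothing\}$, and verify both are Borel subsets of $\sphere^2$. The set $G$ is Borel for free: the family $\mathscr{A} \subseteq \mathscr{B}_R$ is countable by the definition of $\mathcal{B}(U)$ and each $\overline{A}$ is closed in $\sphere^2$, so $G$ is $F_\sigma$.

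For $S$, I would decompose $\Proj[U]$ by which piece of $R$ determines $m\rho_{Z_R}^{s_R}(m)$. For each $B \in \mathscr{B}_R$ the \emph{winning region}
\[
W_B := \overline{B} \setminus \bigcup_{\{B'' \in \mathscr{B}_R \,:\, s_R(B'') < s_R(B)\}} \overline{B''}
\]
is a closed set minus a countable union of closed sets, hence Borel. On $W_B$, $m\rho_{Z_R}^{s_R}(m) = \Psi_{x_B, d_B}(m)$ and the reflected segment is the image of the continuous map $\Phi_B(m, t) := (1-t)\Psi_{x_B, d_B}(m) + t x_B$, $t \in [0, 1]$. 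A point $p$ on this segment other than $\Psi_{x_B, d_B}(m)$ lies on $R$ iff $p \in \Psi_{x_{B'}, d_{B'}}[W_{B'}]$ for some $B' \in \mathscr{B}_R$ (specifically the winning index at direction $p/|p|$). Therefore
\[
S = \bigcup_{B, B' \in \mathscr{B}_R} S_{B, B'}, \quad S_{B, B'} := \bigl\{m \in W_B : \exists\, t \in (0, 1] \text{ with } \Phi_B(m, t) \in \Psi_{x_{B'}, d_{B'}}[W_{B'}]\bigr\},
\]
and it suffices to show each $S_{B, B'}$ is Borel.

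The set $A_{B, B'} := \{(m, t) \in W_B \times (0, 1] : \Phi_B(m, t) \in \Psi_{x_{B'}, d_{B'}}[W_{B'}]\}$ is Borel in $\sphere^2 \times \R$, as the preimage of the Borel set $\Psi_{x_{B'}, d_{B'}}[W_{B'}]$ under the continuous map $\Phi_B$; and $S_{B, B'} = \pi_1[A_{B, B'}]$. The crucial observation is that for each fixed $m$ the vertical fiber $\{t : (m, t) \in A_{B, B'}\}$ is finite, because the line segment $L_B(m)$ meets the nondegenerate quadric $E_{d_{B'}}(x_{B'})$ in at most two points. By the Lusin--Novikov projection theorem (a Borel subset of a product of Polish spaces with countable vertical sections has Borel horizontal projection), $S_{B, B'}$ is Borel. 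Hence $S$, and therefore $V_1^U(\omega) = G \setminus S$, is Borel. The main technical obstacle is exactly this projection step, since projections of Borel sets in products are generically only analytic; what rescues us is the quadric geometry of ellipsoids forcing the finite-fiber hypothesis needed for Lusin--Novikov, together with Lemma \ref{e-lemma1} which supplies the underlying Borel structure for the cones used implicitly in the argument.
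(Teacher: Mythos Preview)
Your argument is correct, but it takes a genuinely different route from the paper. You reduce the measurability of the obstruction set $S$ to a projection of a Borel subset of $\sphere^2\times[0,1]$ and then invoke Lusin--Novikov, justified by the observation that a line meets a nondegenerate ellipsoid in at most two points so the vertical fibers are finite. The paper instead avoids any projection theorem: it first identifies the Borel set of \emph{obstruction points} $Q_B=C_{x_B,\Psi_{x_B,d_B}[B']}\cap(R\setminus\Psi_{x_B,d_B}[B'])$ lying on $R$, then forms the cone $C_{x_B,Q_B,\infty}$ and intersects it back with the ellipsoidal patch $\Psi_{x_B,d_B}[B']$ to obtain the set $L_B$ of blocked reflection points; finally $\Proj[L_B]$ is Borel simply as the preimage of $L_B$ under the homeomorphism $\Psi_{x_B,d_B}$. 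In effect the paper trades your existential quantifier over $t\in(0,1]$ for membership in a geometric cone, so only continuous preimages and countable Boolean operations are needed, with Lemmas~\ref{e-lemma} and~\ref{e-lemma1} supplying the Borel-ness of the cones. Your approach is more portable (it would survive replacing ellipsoids by any family of surfaces meeting lines in finitely many points), while the paper's is more elementary, staying entirely within basic Borel calculus and exploiting the focal geometry to sidestep descriptive set theory.
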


\begin{proof}
We make use of the fact that sets formed from Borel sets through the operations of countable union, countable intersection, and relative complement are Borel. Recall that we obtain a $s_R\in \mathcal{N}(\mathscr{B}_R)$. Note that by the definition of a generalized reflector in $\mathcal{R}_1^U(T )$, $R=\bigcup_{B\in \mathscr{B}_R}\Psi_{x_B,d_{B}}\left[B'\right]$ where $B'=\{m\in B|m\rho_{Z_R}^{s_R}=\Psi_{x_B,d_B}(m)\}=\overline{B}\setminus\bigcup_{K\in \mathscr{K}}\overline{K}$ where $\mathscr{K}=\{A\in \mathscr{B}_R|s_R(A)<s_R(B)\}$; note that $B'$ is clearly Borel and $B\subseteq B'\subseteq \overline{B}$.

For $B\in \mathscr{B}_R$, we have that $C_{x_B,\Psi_{x_B,d_B}[B']}$ and $C_{\origin,\Psi_{x_B,d_B}[B']}$ are Borel sets by Lemmas \ref{e-lemma1} and \ref{e-lemma} respectively. Since $\mathscr{B}_R$ is countable and functions of the form $\Psi_{x_B,d_B}$ are continuous and bijective, $R$ is Borel. Thus for all $B\in \mathscr{B}_R$, the set $Q_B=C_{x_B,\Psi_{x_B,d_B}[B']}\cap (R\setminus \Psi_{x_B,d_B}[B']) $ is Borel and therefore the set $L_B=C_{x_B,Q_B,\infty}\cap \Psi_{x_B,d_B}[B']$ is Borel. Thus $\Proj[L_B]$ is Borel, as it is the preimage of $L_B$ under $\Psi_{x_B,d_B}$. Since 
\begin{equation}
    \{m\in\Proj[U]|\alpha_1(m)=y_R^2(m)\}=\bigcup_{B\in \mathscr{B}_R} \Proj[L_B],
\end{equation}
we have that $\{m\in\Proj[U]|\alpha_1(m)=y_R^2(m)\}$ is Borel and thus $V_1^U(\omega)$ is Borel.
\end{proof}

Define for any generalized reflector $R\in \mathcal{R}_1^U(T)$, 
\begin{equation}
    G_1(\omega)=\mu_g(V_1^U(\omega))
\end{equation}
which we will deem {\it the energy function of the generalized reflector problem}.

Let $F$ be a nonnegative, finite measure on the finite set $T$. We say that a generalized reflector $R\in \mathcal{R}_1^U(T)$ is a {\it weak solution to the generalized reflector problem} if the generalized reflector map $\alpha_1$ determined by $R$ is such that
\begin{equation}\label{weak1e}
    F(\omega)=G_1(\omega)\textnormal{ for any Borel set } \omega \subseteq T.
\end{equation}

It would be useful to point out the similarity of condition (\ref{weak1e}) and condition (\ref{nrg-conserv1}).

\subsection{Geometric Lemmas}
One thing that should be noted is that the definition of the generalized reflector map takes into account that there could potentially be a part of the generalized reflector that intercepts an already reflected ray before it can reach the target set. That fact inspires some key geometric lemmas.

\begin{lemma}\label{geo-lemma}
    Let $R\in \mathcal{R}_1^{U}(T)$ for some finite set $T\subset \R^{3-}$ and open set $U\subseteq R^{3+}$. For all $B\in\mathscr{B}_R$, if $m\in B$, then $\alpha_1(m)=x_B$ if and only if $y_R^2(m)=\varnothing.$
\end{lemma}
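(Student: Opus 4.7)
My plan is to exploit the geometric separation between $T$ and $R$ provided by the half-space hypotheses, together with the fact that $R = W(\rho_{Z_R}^{s_R})$ places exactly one point on each ray from $\origin$. Specifically, $T \subset \R^{3-}$ while $R \subseteq \overline{U} \subseteq \overline{\R^{3+}}$, so $R \cap T = \varnothing$. This will immediately trivialize one direction and reduce the other to a purely geometric identification of $M(m)$.

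For the forward direction I will use the piecewise definition of $\alpha_1$ directly: if $y_R^2(m) \neq \varnothing$, then $\alpha_1(m) = y_R^2(m)$ is a nonempty subset of $R \subseteq \overline{\R^{3+}}$, so it cannot equal the point $x_B \in T \subset \R^{3-}$. Hence $\alpha_1(m) = x_B$ forces $y_R^2(m) = \varnothing$. No geometry beyond the half-space separation is needed.

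For the reverse direction, assume $y_R^2(m) = \varnothing$, so that $\alpha_1(m) = M(m)$ by definition, and it suffices to check $M(m) = x_B$. Here I would argue as follows. Since $m \in B = \Int(\Proj[E_{d_B}(x_B) \cap R])$, there exists a point $p \in E_{d_B}(x_B) \cap R$ with $\Proj(p) = m$; in particular $p$ lies on the ray from $\origin$ in direction $m$. Because $R$ carries at most one point on that ray, namely $m\rho_{Z_R}^{s_R}(m)$, we must have $p = m\rho_{Z_R}^{s_R}(m)$. On the other hand, $\origin$ is a focus of the convex ellipsoid $E_{d_B}(x_B)$ and so lies in its interior; the ray from $\origin$ in direction $m$ therefore meets $E_{d_B}(x_B)$ at the single point $\Psi_{x_B, d_B}(m)$. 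Equating the two descriptions of $p$ gives $m\rho_{Z_R}^{s_R}(m) = \Psi_{x_B, d_B}(m)$, and the definition of $M$ then yields $M(m) = x_B$, as required.

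The hard part will be the identification $M(m) = x_B$ in the reverse direction. It rests on bundling three facts cleanly: the radial structure of $R$ (one reflector point per direction from $\origin$), the uniqueness of the intersection of a ray from a focus with its ellipsoid, and the interior condition on $m$ that forces $E_{d_B}(x_B) \cap R$ to contain a point projecting to $m$ in the first place. Once these three ingredients are written down, both directions of the equivalence drop out.
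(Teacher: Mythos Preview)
Your proof is correct and, like the paper's, is simply an unpacking of the definitions; the paper's entire proof is the single sentence ``This follows directly from the definition of the reflector map of the generalized reflector problem.'' Your forward direction via the half-space separation $R\subseteq\overline{\R^{3+}}$, $T\subset\R^{3-}$ is exactly the right observation, and your reverse direction is valid, though slightly more elaborate than necessary: the most direct route is to note that since $B$ is open and the members of $\mathscr{B}_R$ have pairwise measure-zero closure intersections, $m\in B$ forces $m\notin\overline{B'}$ for every other $B'\in\mathscr{B}_R$, whence $y_{Z_R}^1(m)=\{B\}$ and $\rho_{Z_R}^{s_R}(m)=\psi_{x_B,d_B}(m)$ immediately, without invoking ray--ellipsoid uniqueness.
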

\begin{proof}
    This follows directly from the definition of the reflector map of the generalized reflector problem. 
\end{proof} 

Note the following definition. 

\begin{Def}
    
   When we say that {\bf $r$ is a ray in $C_{x,B,\infty}$}, then $r=\{at+x(1-t)|t\in[0,\infty)\}$ for some $a\in B,$ similarly if we say {\bf $r$ is a line segment in $C_{x,B}$}, then $r=\{at+x(1-t)|t\in[0,1]\}$ for some $a\in B.$
\end{Def}

\begin{lemma}\label{geo-lemma-1}
    Let $A,B\subset\sphere^2$ be disjoint sets. Then for any $x\in \R^3\setminus\{\origin\}$ and $a,b\in(0,\infty)$, $C_{x,\Psi_{x,a}[A]}\cap \Psi_{x,b}[B]=\varnothing$ and $C_{x,\Psi_{x,b}[B]}\cap \Psi_{x,a}[A]=\varnothing$ if and only if $C_{x,\Psi_{x,a}[A],\infty}\cap \Psi_{x,b}[B]=\varnothing$.
\end{lemma}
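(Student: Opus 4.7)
The plan is a direct geometric argument exploiting the parameterization of points in $C_{x,\Psi_{x,a}[A],\infty}$. By Definition~\ref{cone-def}, any such point has the form $p = at + x(1-t) = x + t(a - x)$ for some $a \in \Psi_{x,a}[A]$ and $t \in [0,\infty)$. The key dichotomy driving the whole argument is: if $t \in [0,1]$ then $p$ already belongs to the segment cone $C_{x,\Psi_{x,a}[A]}$, whereas if $t > 1$ then $a = x + (1/t)(p - x)$ with $1/t \in (0,1)$, so that $a$ lies on the segment from $x$ to $p$, i.e.~$a \in C_{x,\{p\}} \subseteq C_{x,\Psi_{x,b}[B]}$ whenever $p \in \Psi_{x,b}[B]$.

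For the $(\Rightarrow)$ direction I would assume both segment-disjointness hypotheses and suppose, for contradiction, that some $p_b \in C_{x,\Psi_{x,a}[A],\infty} \cap \Psi_{x,b}[B]$ exists. Writing $p_b = x + t(p_a - x)$ with $p_a \in \Psi_{x,a}[A]$ and $t \ge 0$, I first note that $x$ is a focus of $E_b(x)$ and hence does not lie on that ellipsoid, so $p_b \neq x$ and therefore $t > 0$. If $t \in (0,1]$ then $p_b \in C_{x,\Psi_{x,a}[A]} \cap \Psi_{x,b}[B]$, contradicting the first hypothesis; if $t > 1$ then $p_a \in C_{x,\Psi_{x,b}[B]} \cap \Psi_{x,a}[A]$, contradicting the second.

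For the $(\Leftarrow)$ direction, the first segment condition is immediate from $C_{x,\Psi_{x,a}[A]} \subseteq C_{x,\Psi_{x,a}[A],\infty}$. For the second, if some $p_a \in \Psi_{x,a}[A] \cap C_{x,\Psi_{x,b}[B]}$ existed, one would write $p_a = x + s(p_b - x)$ with $p_b \in \Psi_{x,b}[B]$ and $s \in [0,1]$; since $x$ is also a focus of $E_a(x)$, $p_a \neq x$, so $s \in (0,1]$, and then $p_b = x + (1/s)(p_a - x) \in C_{x,\Psi_{x,a}[A],\infty} \cap \Psi_{x,b}[B]$, contradicting the ray hypothesis.

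I do not anticipate a substantial obstacle. The only mildly subtle point, and the reason two segment conditions appear in the statement, is the $t \le 1$ versus $t > 1$ dichotomy: neither segment condition alone controls both sub-cases, so both are required to rule out a counterexample ray. I also note that the disjointness of $A$ and $B$ is not actually used anywhere in the argument; it appears to be included only to match the intended application of the lemma in the surrounding generalized reflector framework.
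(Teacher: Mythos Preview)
Your proof is correct and follows essentially the same approach as the paper: both arguments hinge on the dichotomy that a point on a ray from $x$ through $\Psi_{x,a}[A]$ either lies on the segment from $x$ to $\Psi_{x,a}[A]$ (the $t\le 1$ case) or lies beyond it, in which case the corresponding point of $\Psi_{x,a}[A]$ sits on the segment from $x$ to the intersection point (the $t>1$ case). The paper phrases this in the language of rays and line segments without writing out the parameter $t$, while you make the parameterization explicit; your version also spells out the small detail that $p_b\neq x$ because $x$ is a focus, and correctly observes that the disjointness hypothesis on $A,B$ is never invoked.
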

\begin{proof}
If $C_{x,\Psi_{x,a}[A],\infty}\cap \Psi_{x,b}[B]\neq\varnothing$, then there exists a ray $r$ in $C_{x,\Psi_{x,a}[A],\infty}$ such that $r$ intersects $\Psi_{x,b}[B]$. Thus, either there exists a line segment in $C_{x,\Psi_{x,a}[A]}$ that intersects $\Psi_{x,b}[B]$ and thus $C_{x,\Psi_{x,a}[A]}\cap \Psi_{x,b}[B]\neq\varnothing$, or there exists a line segment in $C_{x,\Psi_{x,b}[B]}$ that intersects $\Psi_{x,a}[A]$ and thus $C_{x,\Psi_{x,b}[B]}\cap \Psi_{x,a}[A]\neq\varnothing$. 

Conversely, if $C_{x,\Psi_{x,a}[A]}\cap \Psi_{x,b}[B]\neq\varnothing$, then there exists a line segment in $C_{x,\Psi_{x,a}[A]}$ that intersects $\Psi_{x,b}[B]$, said line segment coincides with a ray in $C_{x,\Psi_{x,a}[A],\infty}$; thus $C_{x,\Psi_{x,a}[A],\infty}\cap \Psi_{x,b}[B]\neq\varnothing$. If $C_{x,\Psi_{x,b}[B]}\cap \Psi_{x,a}[A]\neq\varnothing$, then there exists a line segment in $C_{x,\Psi_{x,b}[B]}$ that intersects $\Psi_{x,a}[A]$, said line segment coincides with a ray in $C_{x,\Psi_{x,a}[A],\infty}$; thus $C_{x,\Psi_{x,a}[A],\infty}\cap \Psi_{x,b}[B]\neq\varnothing$.
\end{proof}

These two lemmas give us the following result.
\begin{lemma}\label{geo-lemma-2}
    Assume that $U$ is an open set in $\R^{3+}$, and $T$ is a finite target set in $\R^{3-}$. Let $R\in \mathcal{R}_1^U(T)$ be a generalized reflector and $A,B\in \mathscr{B}_R$ such that $A\neq B$ and $x=x_A=x_B$. Then the following conditions are equivalent:
    \begin{enumerate}
        \item for all $m\in A$ and $m'\in B$, $\alpha_1(m)=\alpha_1(m')=x$,
        \item $C_{x,\Psi_{x,d_A}[A]}\cap \Psi_{x,d_B}[B]=\varnothing$ and $C_{x,\Psi_{x,d_B}[B]}\cap\Psi_{x,d_A}[A]=\varnothing$,
        \item $C_{x,\Psi_{x,d_A}[A],\infty}\cap \Psi_{x,d_B}[B]=\varnothing$. 
    \end{enumerate}
\end{lemma}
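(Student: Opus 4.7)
The plan is to prove the cycle $(1) \Rightarrow (2) \Leftrightarrow (3) \Rightarrow (1)$, using Lemma \ref{geo-lemma-1} for the purely geometric equivalence and Lemma \ref{geo-lemma} to translate $\alpha_1$-statements into statements about $y_R^2$.

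First, I observe that $A$ and $B$ are disjoint open subsets of $\sphere^2$: as elements of $\mathscr{B}_R \in \mathcal{B}(U)$ they are open, and the requirement $\sigma(\overline{A} \cap \overline{B}) = 0$ forbids any nonempty open overlap (which would carry positive $\sigma$-measure), so $A \cap B = \varnothing$. Applying Lemma \ref{geo-lemma-1} with $a = d_A$ and $b = d_B$ then yields $(2) \Leftrightarrow (3)$ at once.

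For $(1) \Rightarrow (2)$ I would argue by contraposition. Suppose $p \in C_{x, \Psi_{x, d_A}[A]} \cap \Psi_{x, d_B}[B]$. Write $p = \Psi_{x, d_B}(m')$ for some $m' \in B$ and pick $m \in A$ with $p$ lying on the segment from $x$ to $\Psi_{x, d_A}(m)$. The sets $\Psi_{x, d_A}[A]$ and $\Psi_{x, d_B}[B]$ are disjoint: if $d_A \neq d_B$, the confocal ellipsoids $E_{d_A}(x)$ and $E_{d_B}(x)$ do not meet (the focal sum $|q| + |q - x|$ is a distinct constant on each), and if $d_A = d_B$, disjointness comes from $A \cap B = \varnothing$. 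Hence $p \neq \Psi_{x, d_A}(m)$, so $p$ is strictly interior to the segment. Because $B \subseteq B'$ with $\Psi_{x, d_B}[B'] \subseteq R$ (in the notation used to define $R$), we conclude $p \in R \setminus \{\Psi_{x, d_A}(m)\}$ and therefore $p \in y_R^2(m)$. Lemma \ref{geo-lemma} then forces $\alpha_1(m) \neq x$, contradicting $(1)$. The second inclusion in $(2)$ follows by swapping the roles of $A$ and $B$.

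The main obstacle is $(2) \Rightarrow (1)$, since $(2)$ only rules out mutual obstruction between $\Psi_{x, d_A}[A]$ and $\Psi_{x, d_B}[B]$, whereas $\alpha_1(m) = x$ demands that no piece of $R$ interrupt the focal chord from $\Psi_{x, d_A}(m)$ to $x$. The plan is to combine $(2)$ with the confocal-ellipsoid geometry around the common focus $x$: the focal chord lies (except at its endpoint) strictly inside the convex ellipsoid $E_{d_A}(x)$, and I would argue that in the configuration considered by this lemma the piece $\Psi_{x, d_B}[B]$ is the only part of $R$ that can sit on this chord, exploiting the selection rule $\rho_{Z_R}^{s_R}$ and the nestedness of the confocal family sharing focus $x$ to rule out further interceptions. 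Once this is in hand, Lemma \ref{geo-lemma} gives $\alpha_1(m) = x$, and the symmetric argument handles $m' \in B$.
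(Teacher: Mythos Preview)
Your overall structure mirrors the paper's proof closely: the paper also invokes Lemma~\ref{geo-lemma-1} for $(2)\Leftrightarrow(3)$ and Lemma~\ref{geo-lemma} to rewrite $\alpha_1(m)=x$ as $y_R^2(m)=\varnothing$, then asserts that this is equivalent to $(2)$. Your $(1)\Rightarrow(2)$ argument is a fleshed-out version of what the paper leaves implicit, and is fine.

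Where you part ways is precisely at the point you flag as the ``main obstacle,'' $(2)\Rightarrow(1)$. The paper's own proof handles this in a single line: it records that $y_R^2(m)=\varnothing$ iff the segment from $\Psi_{x,d_A}(m)$ to $x$ misses $R\setminus\{\Psi_{x,d_A}(m)\}$, then writes ``Therefore, statements (1) and (2) are equivalent.'' In other words, the paper does \emph{not} supply the additional argument you are trying to construct; it simply identifies the obstruction set with the $B$-piece and moves on. So your instinct that something is being taken for granted is correct, and your write-up is already more careful than the paper's at this step.

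That said, the fix you sketch does not work as stated. Nestedness of the confocal family $\{E_d(x)\}$ and the selection rule $\rho_{Z_R}^{s_R}$ give you no control over pieces $\Psi_{x_C,d_C}[C]$ of $R$ with $x_C\neq x$: such a piece lies on an ellipsoid with a different focus, is not nested with $E_{d_A}(x)$, and can perfectly well sit on the focal chord from $\Psi_{x,d_A}(m)$ to $x$ without violating $(2)$. Even for pieces sharing the focus $x$, condition $(2)$ says nothing about a third $C\in\mathscr{B}_R$ with $x_C=x$ blocking $A$. The geometry you invoke cannot rule these out. In the paper's actual applications (e.g.\ Theorem~\ref{e-prop}, Lemma~\ref{geo-lemma-4}) the pairwise condition $(2)$ or $(3)$ is verified for \emph{every} pair in $\mathscr{B}_R$, and it is that collective statement which yields $(1)$; the lemma is effectively used in that aggregated form rather than for a single pair in isolation.
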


\begin{proof}
    (2) and (3) are equivalent by Lemma \ref{geo-lemma-1}. By Lemma \ref{geo-lemma}, for all $m\in A$ $\alpha_1(m)=x$ if and only if $y_R^2(m)=\varnothing.$ By definition, $y_R^2(m)=\varnothing$ if and only if the line segment between $\Psi_{x,d_A}(m)$ and $x$ does not intersect $R\setminus\{\Psi_{x,d_A}(m)\}$. Similarly, By Lemma \ref{geo-lemma}, for all $m'\in B$, $\alpha_1(m')=x$ if and only if $y_R^2(m')=\varnothing.$ By definition, $y_R^2(m')=\varnothing$ if and only if the line segment between $\Psi_{x,d_B}(m')$ and $x$ does not intersect $R\setminus\{\Psi_{x,d_B}(m')\}$. Therefore, statements (1) and (2) are equivalent.
\end{proof}

\subsection{Generalized Reflectors Constructed in an Open Conical Cylinder of Arbitrary Thickness}

Let $\sphere_+^2=\{m\in\sphere^2|\langle m,(0,0,1)\rangle>0\}$ be the open hemisphere of the $\sphere^2$ oriented towards the positive $z-$axis. Similarly, $\sphere_-^2=\{m\in\sphere^2|\langle m,(0,0,1)\rangle<0\}$ be the open hemisphere of the $\sphere^2$ oriented towards the negative $z-$axis. Given an open $U\subseteq \sphere_+^2$, and $\delta,z'>0$, we then define an {\it open conical cylinder of thickness $\delta$} as $\mathscr{C}_U^\delta(z')=C_{\origin,U,\infty}\cap \{(x,y,z)\in \R^3|z'+\delta>z>z'\}$. 

In this paper, given a finite target set $T\subset \R^{3-}$, we aim to construct a generalized reflector $R\in\mathcal{R}_1^{\mathscr{C}_U^\delta(z')}(T)$ that is a weak solution of the generalized reflector problem. This condition is very strict and the following strategies can potentially be applied to other kinds of open subsets in $\R^{3+}.$

We first consider the case where the target set is a single point. We proceed with the following lemmas.

\begin{lemma}\label{e-lemma2}
Let $U$ be an open set in $\sphere_+^2$ and $z',\delta>0$. Let $\{S_i\}_{i\in \mathbb{N}}$ be a countable collection of open subsets in $U$, $\{d_i\}_{i\in \mathbb{N}}$ is a countable collection of distinct positive numbers, and $x\in\R^{3-}$. Assume that each $\Psi_{x,d_i}[S_i]\subset \mathscr{C}_U^\delta(z')$ and denote $\Psi_i=\Psi_{x,d_i}[S_i].$ Then we have that 
\begin{equation}
    \Proj\left[\mathscr{C}_U^\delta(z')\setminus \bigcup_{i\in \mathbb{N}} (C_{\origin,\overline{\Psi_i},\infty}\cup C_{x,\overline{\Psi_i},\infty})\right]=\Proj\left[\mathscr{C}_U^\delta(z')\setminus \bigcup_{i\in \mathbb{N}} C_{\origin,\overline{\Psi_i},\infty}\right].
\end{equation}
\end{lemma}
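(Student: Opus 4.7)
The inclusion $\Proj[\mathscr{C}_U^\delta(z')\setminus \bigcup_i (C_{\origin,\overline{\Psi_i},\infty}\cup C_{x,\overline{\Psi_i},\infty})] \subseteq \Proj[\mathscr{C}_U^\delta(z')\setminus \bigcup_i C_{\origin,\overline{\Psi_i},\infty}]$ is immediate, since enlarging the removed family only shrinks the set being projected. For the reverse inclusion I would fix a direction $m$ in the right-hand side and produce a $q\in L_m\cap \mathscr{C}_U^\delta(z')$ avoiding every $C_{\origin,\overline{\Psi_i},\infty}$ and every $C_{x,\overline{\Psi_i},\infty}$, where $L_m:=\{tm:t\geq 0\}$. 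The first reduction: each ray $L_m$ is either entirely contained in $C_{\origin,\overline{\Psi_i},\infty}$ (precisely when it passes through $\overline{\Psi_i}$) or meets it only at $\origin$, so the condition on $m$ is equivalent to $m\in U$ together with $m\notin\overline{S_i}$ for every $i\in\mathbb{N}$. Under this hypothesis any point of $L_m\cap\mathscr{C}_U^\delta(z')$ already avoids $\bigcup_i C_{\origin,\overline{\Psi_i},\infty}$, so the problem reduces to producing some $q\in L_m\cap\mathscr{C}_U^\delta(z')$ outside every $C_{x,\overline{\Psi_i},\infty}$.

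Parametrize $L_m\cap \mathscr{C}_U^\delta(z')$ by an open interval $(t_1,t_2)$ of admissible $t$ and set $J_i:=\{t\in(t_1,t_2):tm\in C_{x,\overline{\Psi_i},\infty}\}$. Since $\overline{\Psi_i}$ is compact, lies in $\{z\geq z'\}$, and is separated from $x\in\R^{3-}$, a short sequence argument shows each $C_{x,\overline{\Psi_i},\infty}$ is closed in $\R^3$, so each $J_i$ is closed in $(t_1,t_2)$. Writing $\gamma(t):=(tm-x)/|tm-x|$ for the direction from $x$ to $tm$ and $P_i(v)$ for the unique intersection of the ray from $x$ in direction $v$ with $E_{d_i}(x)$, the condition $tm\in C_{x,\overline{\Psi_i},\infty}$ is equivalent to $P_i(\gamma(t))\in\overline{\Psi_i}$. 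The crucial geometric fact is that the parameter $t^*_i:=\psi_{x,d_i}(m)$ at which $L_m$ crosses $E_{d_i}(x)$ never lies in $J_i$: indeed $P_i(\gamma(t^*_i))=t^*_i m$, so $t^*_i m\in\overline{\Psi_i}$ would force $m\in\overline{S_i}$, and continuity of $P_i\circ\gamma$ then yields an open neighborhood of $t^*_i$ disjoint from $J_i$.

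I would close the argument by contradiction: suppose $(t_1,t_2)=\bigcup_i J_i$. By the Baire category theorem applied to the complete metric space $(t_1,t_2)$ and the countable closed cover $\{J_i\}$, some index $i_0$ admits an open subinterval $(a,b)\subseteq J_{i_0}$. On $(a,b)$ the continuous curve $t\mapsto P_{i_0}(\gamma(t))$ stays in $\overline{\Psi_{i_0}}$, so $\Proj(P_{i_0}(\gamma(t)))\in\overline{S_{i_0}}$ for every $t\in(a,b)$. If $t^*_{i_0}\in(a,b)$ the contradiction is immediate: $m=\Proj(P_{i_0}(\gamma(t^*_{i_0})))\in\overline{S_{i_0}}$, contrary to $m\notin\overline{S_{i_0}}$. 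The case $t^*_{i_0}\notin(a,b)$ is the main obstacle, and closing it requires the most care: here I anticipate iterating the Baire construction on the closed subset $(t_1,t_2)\setminus\mathrm{int}(J_{i_0})$ (still covered by the remaining $J_i$'s together with $\partial J_{i_0}$), extracting further indices and subintervals, and combining the countability of the critical parameters $\{t^*_i\}_{i\in\mathbb{N}}$ with the distinctness of the $d_i$'s to ultimately locate a $t\in(t_1,t_2)$ lying outside every $J_i$, contradicting the assumed cover.
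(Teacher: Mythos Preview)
Your setup is sound and the reduction to showing that $(t_1,t_2)$ is not covered by the closed sets $J_i$ is correct, as is the observation that $t^*_i\notin J_i$. But the proof is not complete: the final paragraph is a sketch, not an argument, and the sketch does not close. Baire gives you an interval $(a,b)\subseteq J_{i_0}$, but there is no reason $t^*_{i_0}$ should land in $(a,b)$; in fact $t^*_{i_0}=\psi_{x,d_{i_0}}(m)$ need not lie in $(t_1,t_2)$ at all, since $m\notin\overline{S_{i_0}}$ means $\Psi_{x,d_{i_0}}(m)$ may sit outside $\mathscr{C}_U^\delta(z')$. Your proposed iteration on $(t_1,t_2)\setminus\operatorname{int}(J_{i_0})$ does not obviously make progress: interior relative to that closed set need not be an interval, and nothing forces the successive $t^*_{i_k}$ to accumulate anywhere useful. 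The vague appeal to ``countability of the critical parameters'' and ``distinctness of the $d_i$'' is not a mechanism.

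The paper's proof is different and genuinely uses the slab geometry you have not touched. It looks at the \emph{top} of the segment $L_m\cap\mathscr{C}_U^\delta(z')$, i.e.\ the point $\Psi_{x,d_{\max}}(m)\in P(z'+\delta)$, and compares $d_{\max}$ to the $d_i$'s. If $d_{\max}$ exceeds every $d_i$, the top point lies strictly outside every $E_{d_i}(x)$, hence outside every $C_{x,\overline{\Psi_i},\infty}$. If $d_{\max}=d_\alpha$ for some $\alpha$, the top point lies on $E_{d_\alpha}(x)\cap P(z'+\delta)$ and being in $C_{x,\overline{\Psi_\alpha},\infty}$ forces it into $\overline{\Psi_\alpha}$, contradicting $m\notin\overline{S_\alpha}$. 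If some $d_\beta>d_{\max}$, the ray from $x$ through the top point would have to meet $\overline{\Psi_\beta}$ at height strictly above $z'+\delta$, contradicting $\Psi_\beta\subset\mathscr{C}_U^\delta(z')$. This height-based trichotomy is exactly the missing geometric input; your purely topological route does not see the planes $z=z'$ and $z=z'+\delta$, and that is why it stalls.
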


\begin{proof}
Assume to the contrary that 
\begin{equation}
    \Proj\left[\mathscr{C}_U^\delta(z')\setminus \bigcup_{i\in \mathbb{N}} (C_{\origin,\overline{\Psi_i},\infty}\cup C_{x,\overline{\Psi_i},\infty})\right]\neq\Proj\left[\mathscr{C}_U^\delta(z')\setminus \bigcup_{i\in \mathbb{N}} C_{\origin,\overline{\Psi_i},\infty}\right].
\end{equation} 
Then there exists a ray $r$ in $C_{\origin,U\setminus \bigcup_{i\in \mathbb{N}}\overline{S_i},\infty}=C_{\origin,U,\infty} \setminus \bigcup_{i\in \mathbb{N}} C_{\origin,\overline{S_i},\infty}$ such that $r \cap \mathscr{C}_U^\delta(z') \subset \bigcup_{i\in \mathbb{N}} C_{x_i,\overline{S_i},\infty}.$ Equivalently, one can say that there must be a ray of direction $m\in U\setminus \bigcup_{i\in \mathbb{N}} \overline{S_i}$ originating from $\origin$ that we denote as $r$ such that $r\cap (\mathscr{C}_U^\delta(z')\setminus \bigcup_{i\in \mathbb{N}}(C_{\origin,\overline{\Psi_i},\infty}\cup C_{x,\overline{\Psi_i},\infty}))=\varnothing.$

Consider the plane $P(\alpha)=\{(x,y,z)\in \R^3|z=\alpha\}$. Let $m\in U\setminus \bigcup_{i\in\mathbb{N}}\overline{S_i}$. Assume that there exists a set $P(z')\cap \bigcup_{i\in\mathbb{N}} C_{x,\overline{\Psi_{i}},\infty}$ such that 
\begin{equation}
    \left[\left(P(z')\cap \bigcup_{i\in\mathbb{N}} C_{x,\overline{\Psi_i},\infty}\right)\setminus\left(P(z')\cap \bigcup_{i\in\mathbb{N}} C_{\origin,\overline{\Psi_i},\infty}\right)\right]\cap  C_{\origin,U,\infty}\neq \varnothing.
\end{equation}
Otherwise there does not exist a ray $r$ of direction $m\in U\setminus \bigcup_{i\in\mathbb{N}} \overline{S_i}$ originating from $\origin$ such that $r\cap (\mathscr{C}_U^\delta(z')\setminus \bigcup_{i\in \mathbb{N}}(C_{\origin,\overline{\Psi_i},\infty}\cup C_{x,\overline{\Psi_i},\infty}))=\varnothing;$ a contradiction. Thus we assume such a ray exists $r$ exists. Then $m$ must be in a direction such that there exists a $d_{\text{min}}>0$ where 
\begin{equation}
   \Psi_{x,d_{\text{min}}}(m)\in  \left[\left(P(z')\cap \bigcup_{i\in \mathbb{N}} C_{x,\overline{\Psi_i},\infty}\right)\setminus \left(P(z')\cap \bigcup_{i\in \mathbb{N}} C_{\origin,\overline{\Psi_i},\infty}\right)\right]\cap C_{\origin,U,\infty}.
\end{equation}
Since $\mathscr{C}_U^\delta(z')$ is bounded, there must also exist a $d_{\text{max}}>0$ such that 
\begin{equation}
   \Psi_{x,d_{\text{max}}}(m)\in P(z'+\delta)\cap C_{\origin,U,\infty}.
\end{equation}

Note that by our assumptions, for all $d\in (d_{\text{min}},d_{\text{max}})$, there exists an $\alpha\in \mathbb{N}$ such that the line segment between $\Psi_{x,d}(m)$ and $x$ is a subset of a line segment in $C_{x,\overline{\Psi_\alpha}}$. However, since all $\overline{\Psi_i}$ are closed, then for all $d\in [d_{\text{min}},d_{\text{max}}]$, there exists an $\alpha$ such that the line segment between $\Psi_{x,d}(m)$ and $x$ is a subset of a line segment in $C_{x,\overline{\Psi_\alpha}}$. 

\begin{mycase}
\case $d_{\text{max}}> d_i$ for all $i\in\mathbb{N}$.

Recall that by our assumptions, $\Psi_{x,d}(m)\in \bigcup_{i\in \mathbb{N}}C_{x,\overline{\Psi_i},\infty}$ for all $d\in [d_{\text{min}},d_{\text{max}}]$. However, since $\psi_{x,d_{max}}(m)>\psi_{x,d_i}(m)$ for all $i\in \mathbb{N}$, $\Psi_{x,d_{max}}(m)$ cannot reside on the interior of any ellipsoid $E_{d'}(x)$ where $d'\in \{d_i\}_{i\in \mathbb{N}}$, thus $\Psi_{x,d}(m)\not\in \bigcup_{i\in \mathbb{N}}C_{x,\overline{\Psi_i},\infty}$. A contradiction. 

\case There exists some $\alpha\in\mathbb{N}$ such that $d_{\text{max}}=d_\alpha$.

If there exists some $\alpha$ such that $d_{\text{max}}=d_\alpha$, then, since $t\Psi_{x,d_{\text{max}}}(m)+(1-t)x\not\in \mathscr{C}_U^\delta(z')$ for all $t>1$, $\Psi_{x,d_{\text{max}}}(m)$ resides on the ellipsoid $E_{d_\alpha}(x)$. Therefore $\Psi_{x,d_{\text{max}}}(m)\in \overline{\Psi_\alpha}\cap P(z'+\delta)$ and thus $m\in \overline{S_\alpha}$. A contradiction.

\case There exists some $\alpha\in\mathbb{N}$ such that $d_\alpha>d_{\text{max}}$.

Assume that $\{d_i\}_{i\in \mathbb{N}}$ is arranged such that $d_{i+1}\ge d_i$ If there exists some $\alpha$ such that $d_\alpha>d_{\text{max}}$, then there exists a ray originating from $x$ that intersects the point $\Psi_{x,d_{\text{max}}}(m)$ that also intersects a point $(x_\beta,y_\beta,z_\beta)\in\overline{\Psi_\beta}$ where $d_\beta\ge d_{max}$. The case where $d_\beta= d_{max}$ has already been covered. When $d_\beta>d_{max}$: since $x\in \R^{3-}$ and $\Psi_{x,d_{\text{max}}}(m)\in P(z'+\delta)$, this implies that $z_\beta>z'+\delta$. A contradiction.
\end{mycase}
\end{proof}

\begin{lemma}\label{e-lemma-func}
    Recall that $\sigma$ is the standard measure on $\sphere^2.$ Let $U$ be a Borel set in $\R^3\setminus\{\origin\}$ such that $\Int(U)\neq \varnothing.$ Let $x\in \R^3\setminus\{\origin\}.$ Consider the set $K(d)=\Proj[E_d(x)\cap U]$ and the corresponding function $D(d)=\sigma(K(d))$ for $d\in(0\infty)$. Then $D(d)$ cannot be identically zero.

    Furthermore, if $U$ is open, $K(d)$ is open in $\sphere^2$ for all $d\in (0,\infty)$.
\end{lemma}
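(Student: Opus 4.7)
The plan is to treat the two claims separately, starting with the easier \emph{furthermore} clause. For any fixed $d \in (0,\infty)$, the map $\Psi_{x,d}: \sphere^2 \to E_d(x)$ is a smooth diffeomorphism whose inverse is $\Proj$ restricted to $E_d(x)$ (the ellipsoid misses $\origin$, so the radial projection is well defined and smooth on it). If $U$ is open in $\R^3$, then $E_d(x) \cap U$ is relatively open in $E_d(x)$, and its preimage under $\Psi_{x,d}$ coincides with $\Proj[E_d(x) \cap U] = K(d)$, which is therefore open in $\sphere^2$ by continuity.

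For the main claim that $D$ is not identically zero, the key geometric observation is that every point of $\R^3$ outside $[\origin, x] \cup \{\origin\}$ lies on exactly one non-degenerate ellipsoid $E_d(x)$ for a unique $d \in (0,\infty)$ (using the characterization of an ellipsoid with foci at $\origin$ and $x$ by the value of $|p| + |p-x|$). The strategy is to find an open ball inside $U$ that avoids the segment $[\origin, x]$, apply this observation to its center, and then project the intersection of the resulting ellipsoid with that ball back to $\sphere^2$.

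Concretely, because $\Int(U) \neq \varnothing$ and $[\origin, x]$ has empty interior in $\R^3$, one can choose $p \in \Int(U) \setminus [\origin, x]$ together with an open ball $B \subseteq \Int(U)$ centered at $p$. Since $p \notin [\origin, x] \cup \{\origin\}$, there is a unique $d_0 \in (0,\infty)$ with $p \in E_{d_0}(x)$. Then $E_{d_0}(x) \cap B$ is a nonempty relatively open subset of the smooth surface $E_{d_0}(x)$, and hence has positive $2$-dimensional surface measure. Applying the diffeomorphism $\Proj|_{E_{d_0}(x)}$, which preserves positive-measure sets, yields $\sigma(\Proj[E_{d_0}(x) \cap B]) > 0$. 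Since $B \subseteq U$, this projection is contained in $K(d_0)$, so $D(d_0) > 0$.

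The only conceptual obstacle is ensuring that the chosen $p$ admits a genuine non-degenerate ellipsoid through it, which forces one to stay off the segment $[\origin, x]$; this is automatic since $[\origin, x]$ is one-dimensional with empty interior in $\R^3$, so $\Int(U)$ cannot be contained in it. No compactness, Fubini, or parameter-sweeping is required: a single $d_0$ already witnesses $D(d_0) > 0$.
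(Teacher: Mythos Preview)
Your proof is correct and follows essentially the same route as the paper: pick a $d_0$ so that $E_{d_0}(x)$ meets $\Int(U)$, observe that this intersection is relatively open in the ellipsoid, and project. Your version is simply more explicit---you justify the existence of such a $d_0$ by choosing a point $p\in\Int(U)\setminus[\origin,x]$ and invoking the focal-sum characterization of ellipsoids, whereas the paper asserts it with a ``clearly''; both arguments then finish identically via the homeomorphism $\Proj|_{E_{d_0}(x)}=\Psi_{x,d_0}^{-1}$.
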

\begin{proof}
    Clearly there exists a $d'\in (0,\infty)$ such that $K(d')\cap \Int(U)\neq \varnothing$. Then $E_{d'}(x)\cap \Int(U)$ is open in $E_{d'}(x)$ and thus $\Proj[E_{d'}(x)\cap \Int(U)]$ is open in $\sphere^2.$ Therefore, $D(d')\ge \sigma(\Proj[K(d')\cap \Int(U)])>0.$
\end{proof}

\begin{theorem}\label{e-prop}

Let $U$ be an open set in $\sphere_+^2$, $\delta,z'>0$, and $T=\{x\}\in\R^{3-}.$ Assume that we are given a nonnegative $g\in L^1(\sphere^2)$ where $g\equiv 0$ outside $U$. Then there exists a generalized reflector $R\in\mathcal{R}_1^{\mathscr{C}_U^\delta(z')}(T)$ such that $G_1(\{x\})=\mu_g(U)$.
\end{theorem}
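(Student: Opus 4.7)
The plan is to construct $R$ as a countable union of patches of ellipsoids $E_{d_i}(x)$, all sharing the focus $x$ and each sitting inside the slab $\mathscr{C}_U^\delta(z')$. Since every ellipsoid with focus $x$ reflects each incident ray from $\origin$ exactly to $x$, such an $R$ will send almost every direction in $U$ to $x$ provided I can (i) tile $\Proj[\mathscr{C}_U^\delta(z')]=U$ up to a $\sigma$-null set by the angular footprints of the patches, and (ii) control the blocked directions --- those $m$ for which the reflected segment from the patch to $x$ is intercepted by another patch --- to be $\sigma$-null.

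For step (i), I first establish a local fit: given any $m_0\in U$, choose $d(m_0)>0$ so that $\Psi_{x,d(m_0)}(m_0)$ lies at the midheight $z'+\delta/2$ of the slab; continuity of $(m,d)\mapsto\Psi_{x,d}(m)$ together with the openness of $\mathscr{C}_U^\delta(z')$ and $U$ provide an open neighborhood $V_{m_0}\subset U$ with $\overline{V_{m_0}}\subset U$ and $\Psi_{x,d(m_0)}[\overline{V_{m_0}}]\subset\mathscr{C}_U^\delta(z')$. Second-countability of $\sphere^2$ yields a countable subcover $\{V_i\}_{i\in\mathbb{N}}$ with associated $\{d_i\}$, perturbed slightly so the $d_i$ are pairwise distinct. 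A standard almost-disjoint refinement --- for instance a Whitney-style decomposition of $U$ further subdivided so each piece fits the local-fit criterion --- produces open $\{S_i\}$ with $\sigma(\overline{S_i}\cap\overline{S_j})=0$ for $i\neq j$, $\Proj[U]\subseteq\bigcup_i\overline{S_i}$, and $\Psi_{x,d_i}[\overline{S_i}]\subset\mathscr{C}_U^\delta(z')$. Then $Z=\bigcup_i\Psi_{x,d_i}[\overline{S_i}]\in\mathcal{E}_{\{x\}}(\mathscr{C}_U^\delta(z'))$, and after picking any $s\in\mathscr{N}(\mathscr{B}_Z)$ I obtain the candidate reflector $R=W(\rho_Z^s)\in\mathcal{R}_1^{\mathscr{C}_U^\delta(z')}(\{x\})$, for which $\mathscr{B}_R=\{S_i\}$, $x_{S_i}=x$, and $d_{S_i}=d_i$.

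For step (ii), the visibility set simplifies to $V_1^{\mathscr{C}_U^\delta(z')}(\{x\})=\bigcup_i\overline{S_i}\setminus B$ where $B=\{m\in\Proj[U]\mid y_R^2(m)\neq\varnothing\}$ is the blocked set. By Lemma \ref{geo-lemma-2}, $m\in S_i\cap B$ iff $\Psi_{x,d_i}(m)\in\bigcup_{j\neq i}C_{x,\overline{\Psi_j},\infty}$, i.e.\ the reflection point lies in a shadow cone from $x$ through some other patch. Lemma \ref{e-lemma2} applied to the family $\{(S_i,d_i)\}$ yields
\[
\Proj\!\left[\mathscr{C}_U^\delta(z')\setminus\bigcup_i\bigl(C_{\origin,\overline{\Psi_i},\infty}\cup C_{x,\overline{\Psi_i},\infty}\bigr)\right]=\Proj\!\left[\mathscr{C}_U^\delta(z')\setminus\bigcup_i C_{\origin,\overline{\Psi_i},\infty}\right]=U\setminus\bigcup_i\overline{S_i},
\]
a $\sigma$-null set. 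Unpacking this projection equality against the shadow characterization of blocking forces $\sigma(B)=0$, whence $G_1(\{x\})=\mu_g(V_1^{\mathscr{C}_U^\delta(z')}(\{x\}))=\mu_g(U)$ using $g\equiv 0$ outside $U$. The main obstacle is exactly this visibility step: Lemma \ref{e-lemma2} is tailored to the geometry ($x\in\R^{3-}$, patches in $\R^{3+}$) and encodes that shadow contributions do not enlarge the projected uncovered region, but translating this into the measure-zero bound on $B$ is where the bulk of the geometric bookkeeping lies.
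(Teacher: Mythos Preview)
Your step (ii) contains a genuine gap: the invocation of Lemma~\ref{e-lemma2} does not yield $\sigma(B)=0$. Once your $\{S_i\}$ already tile $U$ up to a null set, the identity of Lemma~\ref{e-lemma2} is vacuous. Indeed, for any $m\in S_i$ the entire ray $\{tm:t>0\}$ lies in $C_{\origin,\overline{\Psi_i},\infty}$, so the set $\mathscr{C}_U^\delta(z')\setminus\bigcup_i C_{\origin,\overline{\Psi_i},\infty}$ is already contained in the rays over $U\setminus\bigcup_i\overline{S_i}$; the lemma then merely restates that this null set equals itself. It carries no information about whether the $x$-shadow cone of one patch meets another patch, which is what controls $B$.

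And in your construction blocking \emph{can} have positive measure. Take two adjacent pieces $S_1,S_2$ of your Whitney refinement with $d_1<d_2$. Since confocal ellipsoids are nested, for every direction $m\in S_2$ with $\Psi_{x,d_2}(m)\in C_{x,\overline{\Psi_1},\infty}$ the segment $[\Psi_{x,d_2}(m),x]$ meets $\overline{\Psi_1}$, so $m\in B$. Choosing $d_1,d_2$ so that $\Psi_1$ sits near the bottom of the slab and $\Psi_2$ near the top (both allowed by your ``local fit'' criterion), and placing $x$ suitably in $\R^{3-}$, one easily arranges $C_{x,\overline{\Psi_1},\infty}\cap\Psi_2$ to have nonempty interior in $E_{d_2}(x)$; its $\origin$-projection is then an open subset of $S_2$, hence $\sigma(B)>0$. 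Nothing in a Whitney-type decomposition prevents this.

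The paper avoids this by building the patches \emph{adaptively}. At stage $k$ the next patch is carved out of the region $Q_k$, which has already had $C_{x,\Psi_j,\infty}$ removed for every earlier $j$; thus by construction $C_{x,\Psi_j,\infty}\cap\Psi_k=\varnothing$ for $j<k$, and Lemma~\ref{geo-lemma-2} then gives $\alpha_1(m)=x$ for all $m$ in each interior footprint. Lemma~\ref{e-lemma2} is used not to bound $B$ but at each iteration to certify $\Proj[Q_k]=U_k$, i.e.\ that excising the $x$-cones does not cost any directions, so the greedy process can still exhaust $\sigma(U)$. To repair your argument you would need to replace the one-shot Whitney cover by an inductive scheme that, at each step, places the new patch outside the $x$-shadow cones of all previously chosen patches.
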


\begin{proof}
For convenience, label $\mathscr{C}_*=\mathscr{C}_U^\delta(z').$ Recall that $\sigma$ is the standard measure on $\sphere^2.$

Consider the set $K_1(d)=\Proj[E_d(x)\cap \mathscr{C}_*]$ and its corresponding function $D_1(d)=\sigma(K_1(d))$ where $d\in(0,\infty).$ Note that since $\mathscr{C}_*$ is bounded, $D_1(d)\to0$ as $d\to \infty$ and $D_1(d)\to 0$ as $d\to 0.$ By construction, it is clear that $D_1$ is bounded by $0$ and $\sigma(U)$. Therefore, $D_1^{max}=\sup \{D_1(d)|d\in(0,\infty)\}$ exists and is finite, and by Lemma \ref{e-lemma-func}, $D_1^{max}>0$. 

Let $\epsilon_1\in [0,D_1^{max})$. We define $d_{max_1}$ to be a value such that $D_1(d_{max_1})=D_1^{max}-\epsilon_1$ where $\epsilon_1=0$ if $D_1^{max}\in \{D_1(d)|d\in(0,\infty)\}$. We now eliminate the parts of $U$ that had already been accounted for and the parts of $\mathscr{C}_*$ that can no longer be used: let $E_{1}=\overline{\text{Proj}[E_{d_{max_1}}(x)\cap \mathscr{C}_*]}$, $\Psi_1=\Psi_{x,d_{max_1}}[E_1]$,
\begin{equation}
    Q_2=\mathscr{C}_*\setminus (C_{x,\Psi_1,\infty} \cup C_{\origin,\Psi_1,\infty}),
\end{equation}
and $U_2=U\setminus E_1$. Note by Lemma \ref{e-lemma2}, $U_2=\Proj[Q_2]$. Let us define $K_2(d)=\Proj[\Psi_{x,d}[U_2]\cap Q_2]$ and $D_2(d)=\sigma(K_2(d))$. 

Note that since $Q_2$ is bounded, $D_2(d)\to0$ as $d\to \infty$ and $D_2(d)\to 0$ as $d\to 0.$ By construction, it is clear that $D_2$ is bounded by $0$ and $\sigma(U_2)$. Therefore, $D_2^{max}=\sup \{D_2(d)|d\in(0,\infty)\}$ exists and is finite, and by Lemma \ref{e-lemma-func}, $D_2^{max}>0$. Let $\epsilon_2\in [0,D_2^{max})$. We define $d_{max_2}$ to be a value such that $ D_1(d_{max_1})\ge D_2(d_{max_2})=D_2^{max}-\epsilon_2$.

Given that $U_1=U$ and $Q_1=\mathscr{C}_*$, we can now recursively define a sequence of functions and sets for $k\ge2$:
\begin{align}
    E_{k-1}&=\overline{\text{Proj}[\Psi_{x,d_{max_{k-1}}}[U_{k-1}]\cap Q_{k-1}]},\\
    \Psi_{k-1}&=\Psi_{x,d_{max_{k-1}}}[E_{k-1}],\\
    Q_{k}&=Q_{k-1}\setminus (C_{x,\Psi_{k-1},\infty} \cup C_{\origin,\Psi_{k-1},\infty}),\\
    U_k&=U\setminus\left(\bigcup_{j=1}^{k-1}{E_{j}}\right)=\Proj[Q_k],\\
    K_k(d)&=\text{Proj}[E_d(x)\cap Q_{k}],\\
    D_k(d)&=\sigma(K_k(d)).
\end{align}
Also, note that since $Q_k$ is bounded, $D_k(d)\to0$ as $d\to \infty$ and $D_k(d)\to 0$ as $d\to 0.$ By construction, it is clear that $D_k$ is bounded by $0$ and $\sigma(U_k)$. Therefore, $D_k^{max}=\sup \{D_k(d)|d\in(0,\infty)\}$ exists and is finite, and by Lemma \ref{e-lemma-func}, $D_k^{max}>0$. Let $\epsilon_k\in [0,D_k^{max})$. We define $d_{max_k}$ to be a value such that $D_{k-1}(d_{max_{k-1}})\ge D_k(d_{max_k})=D_k^{max}-\epsilon_k$.

Observe that the set $K_k(d)$ is open for all $d>0$. We can therefore construct a sequence 
\begin{equation}
    \left\{\sigma\left(\bigcup_{j=1}^k {E_{j}}\right)\right\}_{k=1}^\infty.
\end{equation}

\begin{claim} 
There exists $\{\epsilon_i\}_{i\in \mathbb{N}}$ such that
$\left\{\sigma\left(\bigcup_{j=1}^k {E_{j}}\right)\right\}_{k=1}^\infty$ converges to $\sigma(U).$
\end{claim}

\begin{proof}
By construction, the sequence increases monotonically and is bounded between $0$ and $\sigma(U)$; thus it converges. Assume to the contrary that for every possible $\{\epsilon_i\}_{i\in \mathbb{N}}$, $\left\{\sigma\left(\bigcup_{j=1}^k {E_{j}}\right)\right\}_{k=1}^\infty$ that converges to an $L\in(0,\sigma(U))$. Then $\sigma\left(U\setminus \bigcup_{j=1}^\infty {E_{j}}\right)=\sigma(U)-L>0.$

Consider the function
\begin{equation}
    D^*(d)=\sigma\left(\text{Proj}\left[E_d(x)\cap \lim_{j\to \infty} Q_j\right]\right).
\end{equation}
Observe that $\lim_{j\to \infty} Q_j = \mathscr{C}_*\setminus \bigcup_{i=1}^\infty(C_{x,\Psi_i,\infty} \cup C_{\origin,\Psi_i,\infty})$. Note that $\bigcup_{i=1}^\infty(C_{x,\Psi_i,\infty} \cup C_{\origin,\Psi_i,\infty})\subseteq \overline{\bigcup_{i=1}^\infty(C_{x,\Psi_i,\infty} \cup C_{\origin,\Psi_i,\infty})}$. Observe that for all $i\in \mathbb{N}$, $\overline{\Int(C_{x,\Psi_i,\infty} \cup C_{\origin,\Psi_i,\infty})}=C_{x,\Psi_i,\infty} \cup C_{\origin,\Psi_i,\infty};$ thus $\overline{\bigcup_{i=1}^\infty\Int(C_{x,\Psi_i,\infty} \cup C_{\origin,\Psi_i,\infty})}= \overline{\bigcup_{i=1}^\infty(C_{x,\Psi_i,\infty} \cup C_{\origin,\Psi_i,\infty})}.$ Thus $\lim_{j\to \infty} Q_j = \mathscr{C}_*\setminus \overline{\bigcup_{i=1}^\infty(C_{x,\Psi_i,\infty} \cup C_{\origin,\Psi_i,\infty})}$ is open and thus $\Int(\lim_{j\to \infty} Q_j)\neq \varnothing$.

Thus, by Lemma \ref{e-lemma-func}, there exists a $d'$ such that $D^*(d')>0$. By the definition of convergence, there exists an $M$ such that for all $m\ge M$, $D^*(d')>\sigma\left(\bigcup_{j=m}^\infty {E_{j}}\right).$  Note that $\sigma\left(\bigcup_{j=m}^\infty {E_{j}}\right)=\sum_{j=m}^\infty \sigma(E_j)=\sum_{j=m}^\infty D_i(d_{max_i})\ge D_m(d_{max_m})$. For all $k\in \mathbb{N}$, $D_k^{max}$ is a limit point of $\{D_k(d)|d\in(0,\infty)\}$, therefore as $\epsilon_k\to 0$, $D_k(d_{max_k})\to D_k^{max}$. Observe that $D_m(d')\ge D^*(d')$ because $\lim_{j\to \infty} Q_j\subseteq Q_m$ and $U\setminus \bigcup_{j=1}^\infty {E_{j}}\subseteq U_m.$ Therefore there exists a sequence $\{\epsilon_i\}_{i\in \mathbb{N}}$ such that $D_m(d_{max_m})\ge D^*(d').$ For this sequence $\sigma\left(\bigcup_{j=m}^\infty {E_{j}}\right)\ge D^*(d')$; a contradiction. Thus, there exists $\{\epsilon_i\}_{i\in \mathbb{N}}$ such that
$\left\{\sigma\left(\bigcup_{j=1}^k {E_{j}}\right)\right\}_{k=1}^\infty$ converges to $\sigma(U).$
\end{proof}

Let
\begin{equation}
    Z=\bigcup_{j=1}^\infty \Psi_j.
\end{equation}
For some $s\in \mathcal{N}(\mathscr{B}_Z)$, consider the generalized reflector $R=W(\rho_Z^s)\in\mathcal{R}_1^{\mathscr{C}_*}(T)$. By construction,
\begin{equation}
    C_{x,\Psi_{x,d_{max_j}}[\Int(E_j)],\infty}\cap \Psi_{x,d_{max_{j'}}}[\Int(E_{j'})]=\varnothing
\end{equation}
when $j\neq j'$. Observe that if $j',j\in \mathbb{N}$ where $j'>j$, then $d_{max_j}\neq d_{max_{j'}}$ because otherwise $E_{j'}\subseteq E_{j}$; thus $\mathscr{B}_R=\{\Int(E_j)\subset \sphere^2|j\in \mathbb{N}\}$. Thus, by Lemma \ref{geo-lemma-2}, for all $m\in B$ where $B\in \mathscr{B}_R$, we have $\alpha_1(m)=x$. Then, for any $s\in \mathcal{N}(\mathscr{B}_Z)$, the generalized reflector $R=W(\rho_Z^s)\in\mathcal{R}_1^{\mathscr{C}_*}(T)$ is a weak solution to the generalized reflector problem such that $G_1(\{x\})=\mu_g(U)$.

\end{proof}

We can now prove a result where our target set is made up of finitely many points. First, we prove the following lemma.

\begin{lemma}\label{geo-lemma-3}
    Assume that $U$ is an open set in $\R^{3+}$, and $T$ is a finite target set in $\R^{3-}$. Let $R\in \mathcal{R}_1^U(T)$ be a generalized reflector and $A,B\in \mathscr{B}_R$ such that $A\neq B$. Then the following conditions are equivalent:
    \begin{enumerate}
        \item for all $m\in A$ and $m'\in B$, $\alpha_1(m)=x_A$ and $\alpha_1(m')=x_B$,
        \item $C_{x_A,\Psi_{x_A,d_A}[A]}\cap \Psi_{x_B,d_B}[B]=\varnothing$ and $C_{x_B,\Psi_{x_B,d_B}[B]}\cap\Psi_{x_A,d_A}[A]=\varnothing$.
    \end{enumerate}
\end{lemma}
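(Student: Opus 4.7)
The plan is to mirror the proof of Lemma \ref{geo-lemma-2} and adapt it to the case where $x_A$ and $x_B$ may differ. The main tool will again be Lemma \ref{geo-lemma}, which translates membership in $\alpha_1^{-1}(x_A)$ into a statement purely about $y_R^2$. Specifically, for each $m \in A$, Lemma \ref{geo-lemma} gives $\alpha_1(m) = x_A$ if and only if $y_R^2(m) = \varnothing$, and by the definition of $y_R^2$, the latter is equivalent to saying that the line segment from $\Psi_{x_A, d_A}(m)$ to $M(m) = x_A$ does not intersect $R \setminus \{\Psi_{x_A, d_A}(m)\}$. The symmetric statement holds for $m' \in B$, with $x_A$ and $A$ replaced by $x_B$ and $B$.

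The next step is to recognize the geometric identity that, as $m$ ranges over $A$, the family of line segments from $\Psi_{x_A, d_A}(m)$ to $x_A$ sweeps out precisely $C_{x_A, \Psi_{x_A, d_A}[A]}$, and likewise for $B$. Thus the non-intersection condition in Lemma \ref{geo-lemma}, when focused on the portion of $R$ coming from $B$, becomes $C_{x_A, \Psi_{x_A, d_A}[A]} \cap \Psi_{x_B, d_B}[B] = \varnothing$, and symmetrically the condition about rays from $B$ reaching $x_B$ unobstructed by $\Psi_{x_A, d_A}[A]$ becomes $C_{x_B, \Psi_{x_B, d_B}[B]} \cap \Psi_{x_A, d_A}[A] = \varnothing$.

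For the forward implication (1) $\Rightarrow$ (2), I would argue that if every ray from $A$ reaches $x_A$ unobstructed, then in particular no segment from $\Psi_{x_A,d_A}(m)$ to $x_A$ can meet $\Psi_{x_B, d_B}[B]$, yielding the first emptiness statement in (2); applying the same reasoning with the roles of $A$ and $B$ swapped gives the second. For the reverse implication (2) $\Rightarrow$ (1), the two emptiness conditions together say precisely that the relevant blocking between the $A$-part and the $B$-part is ruled out, so $y_R^2(m) = \varnothing$ for $m \in A$ and $y_R^2(m') = \varnothing$ for $m' \in B$, and Lemma \ref{geo-lemma} converts this back into (1).

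The main subtlety, and what makes this genuinely different from Lemma \ref{geo-lemma-2}, is that Lemma \ref{geo-lemma-1} cannot be invoked to collapse (2) into a single infinite-cone statement, because the two cones now have different apices $x_A$ and $x_B$. Consequently, unlike in Lemma \ref{geo-lemma-2}, both halves of (2) are genuinely needed and no third equivalent condition in terms of $C_{\cdot,\cdot,\infty}$ is asserted. Beyond this, the argument is purely bookkeeping: tracking which line-segment family swept out by varying $m$ corresponds to which cone, and matching the emptiness conditions to the correct foci.
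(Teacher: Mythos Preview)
Your proposal mirrors the paper's proof essentially exactly: the paper also separates into the case $x_A=x_B$ (deferring to Lemma~\ref{geo-lemma-2}) and the case $x_A\neq x_B$, invokes Lemma~\ref{geo-lemma} to rewrite $\alpha_1(m)=x_A$ as $y_R^2(m)=\varnothing$, and then unwinds the definition of $y_R^2$ as a line-segment non-intersection condition to obtain the equivalence with~(2). Your remark that Lemma~\ref{geo-lemma-1} is unavailable when the apices differ is exactly why the paper drops the third equivalent condition that appeared in Lemma~\ref{geo-lemma-2}.
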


\begin{proof}
    For the case where $x_A=x_B$, we have Lemma \ref{geo-lemma-2}. We now consider the case where $x_B\neq x_B.$ By Lemma \ref{geo-lemma}, for all $m\in A$ $\alpha_1(m)=x_A$ if and only if $y_R^2(m)=\varnothing.$ By definition, $y_R^2(m)=\varnothing$ if and only if the line segment between $\Psi_{x_A,d_A}(m)$ and $x_A$ does not intersect $R\setminus\{\Psi_{x_A,d_A}(m)\}$. Similarly, By Lemma \ref{geo-lemma}, for all $m'\in B$ $\alpha_1(m')=x_B$ if and only if $y_R^2(m')=\varnothing.$ By definition, $y_R^2(m')=\varnothing$ if and only if the line segment between $\Psi_{x_B,d_B}(m')$ and $x$ does not intersect $R\setminus\{\Psi_{x_B,d_B}(m')\}$. Therefore, statements (1) and (2) are equivalent.
\end{proof}

\begin{theorem}\label{e-main2}
Let $U$ be an open set in $\sphere_+^2$, $\delta,z'>0$, and $\{x_1,\dots,x_k\}\in \R^{3-}$ where $k\ge 2$. Assume we are given a nonnegative $g\in L^1(\mathbb{S}^2)$ where $g\equiv 0$ outside $U$. Let $f_1,f_2,\dots,f_k$ be nonnegative real numbers such that
\begin{equation}
    \sum_{i=1}^k f_i=\mu_g(U).
\end{equation}
Assume that there exists $n\ge k$ disjoint open sets $B_i$ in $U$ where $\bigcup_{i\in[n]}\overline{B_i}=\overline{U}$. Also assume that there exists a collection of $k$ subsets of $[n],$ $\{A_i\}_{i\in[k]},$ such that: $A_t\cap A_{t'}=\varnothing$ where $t\neq t'$, $\bigcup_{i\in[k]} A_i=[n]$, and $\mu_g(\bigcup_{i\in A_t}B_i)=f_t.$ Suppose that for all $i\in [n]$ there exists $a_i,b_i>0$ where $z'\le a_i<a_i+b_i\le z'+\delta$ such that $\mathscr{C}_{B_i}^{b_i}(a_i)\cap {C}_{x_j,\mathscr{C}_{B_j}^{b_j}(a_j)}=\varnothing$ for all $j\in[n]\setminus \{i\}$.

Then there exists a generalized reflector in $R\in \mathcal{R}_1^{\mathscr{C}_U^\delta(z')}(T)$ such that $G_1(\{x_i\})=f_i$ for all $i\in [k]$.
\end{theorem}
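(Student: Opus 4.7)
The plan is to reduce the multi-point case to $n$ disjoint instances of Theorem~\ref{e-prop}, one per index $i \in [n]$, and then paste the resulting single-target reflectors together into a generalized reflector on $\mathscr{C}_U^\delta(z')$. The geometric hypothesis on the cylinders $\mathscr{C}_{B_i}^{b_i}(a_i)$ is precisely what is needed to guarantee that the individual pieces do not block one another's reflected rays.

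First I would define the target-assignment map $\tau : [n] \to [k]$ by letting $\tau(i)$ be the unique $t \in [k]$ with $i \in A_t$. For each $i \in [n]$, apply Theorem~\ref{e-prop} with aperture $B_i \subseteq \sphere^2_+$, single-point target $\{x_{\tau(i)}\}$, radiance $g \cdot \chi_{B_i}$, base height $a_i$, and thickness $b_i$. This yields a generalized reflector $R_i \in \mathcal{R}_1^{\mathscr{C}_{B_i}^{b_i}(a_i)}(\{x_{\tau(i)}\})$ together with a countable family $\mathscr{B}_{R_i}$ of open subsets of $B_i$; by construction, $R_i$ sends the ray of (almost) every direction $m \in B_i$ to $x_{\tau(i)}$. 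Now set $R := \bigcup_{i \in [n]} R_i$. Because the $B_i$ are pairwise disjoint, so are the families $\mathscr{B}_{R_i}$, and thus $\mathscr{B}_R := \bigsqcup_i \mathscr{B}_{R_i}$ is an element of $\mathcal{B}(\mathscr{C}_U^\delta(z'))$ (it almost-covers $U = \Proj[\mathscr{C}_U^\delta(z')]$ since $\bigcup_i \overline{B_i} = \overline{U}$). Keeping the data $(x_B, d_B)$ inherited from the individual $R_i$ and choosing any $s \in \mathscr{N}(\mathscr{B}_R)$, one identifies $R = W(\rho_Z^s) \in \mathcal{R}_1^{\mathscr{C}_U^\delta(z')}(T)$ with $Z = \bigcup_{B \in \mathscr{B}_R} \Psi_{x_B, d_B}[\overline{B}]$.

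The crux is verifying that for every $B \in \mathscr{B}_{R_i}$ and every $m \in B$ one has $\alpha_1(m) = x_{\tau(i)}$. For two base sets inside the same $R_i$ this is already part of the Theorem~\ref{e-prop} construction (via Lemma~\ref{geo-lemma-2}). For $B \in \mathscr{B}_{R_i}$ and $B' \in \mathscr{B}_{R_j}$ with $i \neq j$, I would invoke Lemma~\ref{geo-lemma-3}, which reduces the desired conclusion to
\begin{equation*}
    C_{x_{\tau(i)}, \Psi_{x_{\tau(i)}, d_B}[B]} \cap \Psi_{x_{\tau(j)}, d_{B'}}[B'] = \varnothing
\end{equation*}
together with the symmetric statement obtained by swapping $(i, B)$ and $(j, B')$. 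These follow from $\Psi_{x_{\tau(j)}, d_{B'}}[B'] \subset R_j \subset \mathscr{C}_{B_j}^{b_j}(a_j)$ and $C_{x_{\tau(i)}, \Psi_{x_{\tau(i)}, d_B}[B]} \subset C_{x_{\tau(i)}, \mathscr{C}_{B_i}^{b_i}(a_i)}$, together with the theorem's hypothesis $\mathscr{C}_{B_j}^{b_j}(a_j) \cap C_{x_{\tau(i)}, \mathscr{C}_{B_i}^{b_i}(a_i)} = \varnothing$ (reading each $x_i$ in the statement as $x_{\tau(i)}$). Energy accounting then closes the argument: the visibility set $V_1^{\mathscr{C}_U^\delta(z')}(\{x_t\})$ has full $\mu_g$-measure inside $\bigcup_{i \in A_t} B_i$, so
\begin{equation*}
    G_1(\{x_t\}) = \sum_{i \in A_t} \mu_g(B_i) = \mu_g\Bigl(\bigcup_{i \in A_t} B_i\Bigr) = f_t.
\end{equation*}

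The hard part will be the non-interference step: one must correctly interpret the hypothesis $\mathscr{C}_{B_i}^{b_i}(a_i) \cap C_{x_j, \mathscr{C}_{B_j}^{b_j}(a_j)} = \varnothing$ (with $x_j$ understood as $x_{\tau(j)}$) and then carefully thread Lemma~\ref{geo-lemma-3} through every pair of building blocks coming from distinct $R_i$'s, while still inheriting the internal non-interference of each $R_i$ from Theorem~\ref{e-prop}.
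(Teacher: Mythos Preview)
Your proposal is correct and follows the same approach as the paper: apply Theorem~\ref{e-prop} on each $\mathscr{C}_{B_i}^{b_i}(a_i)$ to get single-target pieces $R_i$, take their union, and use the cylinder-disjointness hypothesis together with Lemma~\ref{geo-lemma-3} to rule out cross-blocking. Your introduction of the map $\tau$ to resolve the indexing ambiguity in the hypothesis (where $x_j$ must be read as $x_{\tau(j)}$) is a clarification the paper leaves implicit, and your explicit separation of the internal non-interference (inherited from Theorem~\ref{e-prop}) from the external one is slightly more detailed than the paper, which simply invokes Lemma~\ref{geo-lemma-3} uniformly over all pairs in $\mathscr{B}_R$.
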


\begin{proof}
For each $i \in [n]$, $\mathscr{C}_{B_i}^{b_i}(a_i)$ is open and a generalized reflector $R_i\in \mathcal{R}_1^{\mathscr{C}_{B_i}^{b_i}(a_i)}(\{x_i\})$ is constructed in the exact same way as Theorem \ref{e-prop}. By our assumptions, since $R_i\subseteq \overline{\mathscr{C}_{B_i}^{b_i}(a_i)}$, then 
\begin{equation}
    (R_i\cap \mathscr{C}_{B_i}^{b_i}(a_i)) \cap C_{x_j,R_j\cap \mathscr{C}_{B_j}^{b_j}(a_j)}=\varnothing
\end{equation} 
for all $j\in[n]\setminus \{i\}$.

Let $F=\bigcup_{i\in[k]} R_i$ and consider the generalized reflector $R=W(\rho_F)\in \mathcal{R}_1^{\mathscr{C}_U^\delta(z')}(T)$. By construction, for an $A,B\in \mathscr{B}_R$ such that $A\neq B$, we have that $C_{x_A,\Psi_{x_A,d_A}[A]}\cap \Psi_{x_B,d_B}[B]=\varnothing$ and $C_{x_B,\Psi_{x,d_B}[B]}\cap\Psi_{x_A,d_A}[A]=\varnothing$. Thus by Lemma \ref{geo-lemma-3}, for any $B\in \mathscr{B}_R$, for all $m\in B$ we have $\alpha_1(m)=x_B$. Then, for any $s\in \mathcal{N}(\mathscr{B}_F)$, the generalized reflector $R=W(\rho_F^s)\in\mathcal{R}_1^{\mathscr{C}_U^\delta(z')}(T)$ is a weak solution to the generalized reflector problem such that $G_1(\{x_i\})=f_i$ for all $i\in [k]$.
\end{proof}

We now will use Theorem \ref{e-main2} to construct a specific type of generalized reflector. Note the following definition. 
\begin{Def}\label{circdef-ellipse}
Let $k\ge 2$, $d>0$, $\xi\in(-1,0)$, and $t\in \R$. Recall that, given a point $(x,y,z)\in \R^3$, there exists $r \in [0, \infty)$, $\phi \in [0, \pi]$, $\theta \in [0, 2\pi)$, such that
\begin{align}
    x&=r\cos\theta \sin\phi\\
    y&=r\sin\theta\sin\phi\\
    z&=r\cos\phi.
\end{align}

Define the set of points $T_{k,d}^\xi(t)$ as
\begin{equation}
    \left\{\left(d\cos\left(\frac{2\pi j}k+t\right)\sin\left(\arccos(\xi)\right),d\sin\left(\frac{2\pi j}k+t\right)\sin\left(\arccos(\xi)\right),d\xi\right)|j\in I\right\}
\end{equation}
where $I=\{0,1,\dots,k-1\}.$

If we are additionally given an $i\in \{0,1,\dots,k-1\}$, we may define the set $P_{k,i}(t)\subset \sphere^2$, as 
\begin{multline}
    \left\{\left(\cos\left(\theta+\frac{\pi (2i-1)}k+t\right) \sin\phi,\sin\left(\theta+\frac{\pi (2i-1)}k+t\right)\sin\phi,\cos\phi\right)\right.\\
    \left|\phi \in [0, \pi],\theta \in \left[0, \frac{2\pi }k\right] \right\}.
\end{multline}

If $k=1,$ define the set of points $T_{1,d}^\xi(t)= \left\{(0,0,-d)\right\}$ and $P_{1,0}(t)=\sphere^2$.
\end{Def}

It is good to observe that $T_{k,d}^\xi(t)$ defines the points of a regular $k$-gon centered at the $z$-axis and that $P_{k,i}(t)$ defines a spherical wedge.

\begin{theorem}\label{rot-sym-gen-ref}
Let $\delta,z'>0$. Consider the open disk $U=\{m\in\sphere_+^2|\langle (0,0,1),m\rangle>c\}$ where $0<c<1$. Let $d_1,\dots, d_n$ be a collection of not necessarily distinct positive numbers. Let $k_1,\dots, k_n$ be a collection of not necessarily distinct positive integers. Let $\xi_1,\dots, \xi_n$ be a collection of not necessarily distinct numbers such that $\xi_i\in (-1,0)$. Let $t_1',\dots, t_n'$ be a collection of not necessarily distinct elements of $\R$. Let us denote $T_i=T_{k_i,d_i}^{\xi_i}(t_i')$ and let $T=\bigcup_{i=1}^n T_i.$

Assume that we a given a nonnegative $g\in L^1(\sphere_+^2)$ that is rotationally symmetric about the $z$-axis such that $g\equiv 0$ outside $U$. Let $f_1,\dots, f_n$ be a collection of positive numbers such that 
\begin{equation}
    \mu_g(U)=\sum_{i=1}^n f_i.
\end{equation}
Then there exists a generalized reflector $R\in\mathcal{R}_1^{\mathscr{C}_U^\delta(z')}(T)$ such that
\begin{equation}
    G_1(\{x\})=\sum_{\left\{j\in[n]|x\in T_j\right\}}\frac{f_j}{k_j}
\end{equation}
for all $x\in T.$
\end{theorem}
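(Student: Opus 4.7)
My plan is to reduce this to Theorem \ref{e-main2} via a rotationally symmetric annular--wedge partition of $U$ adapted to the data. First, using the rotational symmetry of $g$ about the $z$-axis and the monotonicity of $\mu_g$ in colatitude, I would choose $0=\phi_0<\phi_1<\cdots<\phi_n=\arccos c$ so that each spherical annulus $U_i=\{m\in U:\phi_{i-1}<\arccos\langle m,(0,0,1)\rangle<\phi_i\}$ satisfies $\mu_g(U_i)=f_i$. Within each $U_i$ I would then set $W_{i,j}=U_i\cap P_{k_i,j}(t_i')$ for $j=0,\ldots,k_i-1$. Because $g$ is rotationally symmetric and the wedges $P_{k_i,j}(t_i')$ are rotates of one another by multiples of $2\pi/k_i$ about the $z$-axis, each $W_{i,j}$ satisfies $\mu_g(W_{i,j})=f_i/k_i$; assigning $W_{i,j}$ to the vertex $x_j^i\in T_i$ at the azimuthal center of $P_{k_i,j}(t_i')$ gives, for every distinct $y\in T$, the grouping $A_y=\{(i,j):x_j^i=y\}$ with $\sum_{(i,j)\in A_y}\mu_g(W_{i,j})=\sum_{\{j\in[n]:y\in T_j\}}f_j/k_j$, precisely the energy prescribed at $y$.

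Next I would pick heights $a_i,b_i\in[z',z'+\delta]$ (one pair per annulus, shared by its $k_i$ wedges) to verify the disjointness hypothesis of Theorem \ref{e-main2}: $\mathscr{C}_{W_{i,j}}^{b_i}(a_i)\cap C_{x_{i',j'},\mathscr{C}_{W_{i',j'}}^{b_{i'}}(a_{i'})}=\varnothing$ for all $(i,j)\neq(i',j')$. My strategy is to stack the annuli as $a_1<a_1+b_1<\cdots<a_n+b_n\le z'+\delta$ with each $b_i$ small. Two structural observations drive the argument. First, for $k_i\ge 2$ the lift of $P_{k_i,j}(t_i')$ to $\R^3$ is a convex cone with dihedral angle $2\pi/k_i\le \pi$ that contains both $x_j^i$ and $\mathscr{C}_{W_{i,j}}^{b_i}(a_i)$, so $C_{x_j^i,\mathscr{C}_{W_{i,j}}^{b_i}(a_i)}$ lies entirely inside this convex wedge and cannot meet cylinders lying in azimuthally disjoint wedges of the same annulus; the degenerate case $k_i=1$ reduces to the on-axis single-target construction of Theorem \ref{e-prop}. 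Second, across annuli, the stacking prevents a cone issuing from a lower annulus's target from reaching a higher annulus's cylinder at all, while for cones from a higher annulus's target the cross-section at the height of a lower annulus's cylinder is pushed, by the same homothety computation as in Theorem \ref{e-prop}, to a radial distance strictly exceeding that annulus's outer radius.

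The main obstacle is making this last radial comparison rigorous when the targets $x_j^i$ are genuinely off the $z$-axis, since the cross-section of $C_{x_j^i,\mathscr{C}_{W_{i,j}}^{b_i}(a_i)}$ at an intermediate height is no longer a concentric annular sector but a translated, homothetically scaled image of the cylinder's footprint centered at $x_j^i$. I would control this by choosing each $b_i$ and each annular width $\phi_i-\phi_{i-1}$ small enough that the cone cross-sections are thin tubular neighborhoods of a one-parameter family of line segments from $x_j^i$ to a nominal ring, reducing disjointness to the same algebraic inequality $a_i>z$ that powered the single-target proof, applied separately on each azimuthal wedge. Once the heights are in place, Theorem \ref{e-main2} supplies the desired $R\in\mathcal{R}_1^{\mathscr{C}_U^\delta(z')}(T)$ with $G_1(\{x_j^i\})=\mu_g(W_{i,j})=f_i/k_i$ for each wedge, and summing over the pairs $(i,j)$ with $x_j^i=x$ yields the claimed identity $G_1(\{x\})=\sum_{\{j\in[n]:x\in T_j\}}f_j/k_j$.
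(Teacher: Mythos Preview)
Your overall architecture matches the paper's proof almost exactly: partition $U$ into concentric annuli $B_i$ with $\mu_g(B_i)=f_i$ via the intermediate value theorem, split each $B_i$ into $k_i$ congruent wedges using $P_{k_i,j}(t_i')$, stack the annuli in height inside $\mathscr{C}_U^\delta(z')$, and invoke Theorem~\ref{e-main2}. The same-annulus wedge disjointness (convexity of the dihedral cone $C_{\origin,P_{k_i,j}(t_i'),\infty}$ containing both the cylinder and its target) and the easy cross-annulus direction (segments from a lower slab to a target in $\R^{3-}$ cannot climb to a higher slab) are handled just as in the paper.

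The gap is in your treatment of the remaining cross-annulus case. You propose to ``control this by choosing each $b_i$ and each annular width $\phi_i-\phi_{i-1}$ small enough,'' but the annular widths are \emph{not} free parameters: once the $f_i$ are prescribed, the colatitudes $\phi_i$ are fixed by $\mu_g(U_i)=f_i$, and a single $f_i$ close to $\mu_g(U)$ forces a wide annulus. Your appeals to ``the same homothety computation as in Theorem~\ref{e-prop}'' and ``the algebraic inequality $a_i>z$'' also do not correspond to anything in that theorem as it appears in the paper, so they cannot carry the argument.

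The paper resolves this case without any smallness. With the innermost annulus placed lowest and the outermost highest (heights $z'+(i-1)\delta/n$, thickness $\delta/n$), it observes that every lower slab $U_{i'}(j')$ with $i'<i$ lies inside the solid cone $C_{\origin,\{m:\langle(0,0,1),m\rangle>\zeta_{i-1}\},\infty}$, while $C_{T_i(j),U_i(j)}$ is disjoint from that cone: both $U_i(j)$ and $T_i(j)$ lie outside it (the former because $B_i$ sits just outside the cap $\{\langle(0,0,1),m\rangle>\zeta_{i-1}\}$, the latter because it is in $\R^{3-}$), and the segment between them never re-enters. This is a direct geometric claim about the fixed configuration, not a limiting argument, and it is what you should substitute for your smallness step.
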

\begin{proof}

By the intermediate value theorem, there exists a collection of numbers $\zeta_1,\dots, \zeta_n\in[c,1)$ where $\zeta_n=c$ and $\mu_g\left(\{m\in\sphere_+^2|\langle(0,0,1),m\rangle>\zeta_i\}\right)=\sum_{j=1}^i f_j.$ Define 
\begin{equation}
    B_i=\{m\in\sphere_+^2|\langle(0,0,1),m\rangle>\zeta_{i}\}\setminus\{m\in\sphere_+^2|\langle(0,0,1),m\rangle\ge\zeta_{i-1}\}
\end{equation}
for all $i\in \{2,\dots, n\}$ and $B_1=\{ m\in\sphere_+^2|\langle(0,0,1),m\rangle>\zeta_1\}.$ Thus $\mu_g(B_i)=f_i.$ 

Consider the set $T_i$, let 
\begin{equation}
    T_i(j)=\left(d_i\cos\left(\frac{2\pi j}{k_i}+t_i'\right)\sin\left(\arccos(\xi_i)\right),d_i\sin\left(\frac{2\pi j}{k_i}+t_i'\right)\sin\left(\arccos(\xi_i)\right),d_i\xi_i\right)
\end{equation}
where $j\in\{0,\dots, k_i-1\}$ if $k\ge 2$ and $T_i(0)=(0,0,-d_i)$.

Let $P_i(j)=P_{k_i,j}(t_i')$ where $j\in\{0,\dots, k_i-1\}$, then by construction $\mu_g(B_i\cap P_i(j))=\frac{f_i}{k_i}$. Let $U_i(j)=\mathscr{C}_{B_i\cap \Int(P_i(j))}^{\frac{\delta}n}\left(z'+(i-1)\frac{\delta}{n}\right)$ where $j\in\{0,\dots, k_i-1\}$. Since $T_i(j)\in \R^{3-}$, any given line segment between a point in $U_{i}(j)$ and the point $T_i(j)$ will not intersect any set $U_{i'}(j')$ where $i'>i$ and $j'\in \{0,\dots, k_{i'}-1\}$. Therefore $C_{T_i(j), U_{i}(j)}\cap U_{i'}(j')=\varnothing$ where $i'>i$ and $j'\in \{0,\dots, k_{i'}-1\}$. Also, since $C_{\origin,P_i(j),\infty}$ is a convex set and $U_{i}(j)$, $\{T_i(j)\}$ are both subsets of $C_{\origin,P_i(j),\infty}$, we then have $C_{T_i(j),U_{i}(j)}\subset C_{\origin,P_i(j),\infty}.$ Therefore, $C_{T_i(j), U_{i}(j)}\cap U_{i}(j')=\varnothing$ where $j'\neq j$. Finally, if there exists a line segment between a point in $U_{i}(j)$ and $T_i(j)$ that intersects a $U_{i'}(j')$ where $i>i'$ and $j'\in \{0,\dots, k_{i'}-1\}$, then it must intersect $C_{\origin,\{m\in\sphere_+^2|\langle(0,0,1),m\rangle>\zeta_{i-1}\},\infty}$. However, $C_{T_i(j),U_{i}(j)}$ is disjoint from $C_{\origin,\{m\in\sphere_+^2|\langle(0,0,1),m\rangle>\zeta_{i-1}\},\infty}$ and thus $C_{T_i(j), U_{i}(j)}\cap U_{i'}(j')=\varnothing$ where $i>i'$ and $j'\in \{0,\dots, k_{i'}-1\}$. Therefore, $C_{T_i(j), U_{i}(j)}\cap U_{i'}(j')=\varnothing$ when $(i,j)\neq (i',j').$

Therefore, by Theorem \ref{e-main2}, there exists a generalized reflector $R\in\mathcal{R}_1^{C_U^\delta(z')}(T)$ such that
\begin{equation}
    G_1(\{x\})=\sum_{\left\{j\in[n]|x\in T_j\right\}}\frac{f_j}{k_j}
\end{equation}
for all $x\in T.$

\end{proof}

\section{Interpolated Reflectors}
The generalized reflector presented in the previous section might be impossible or, at best, very difficult to construct in the real world. Thus we introduce the following notion.

\begin{Def}\label{interpolatedef}
    Assume that we are given an aperture that is a connected open set $D\subseteq \sphere^2$ and a not necessarily continuous, almost everywhere differentiable function $\rho:D \to (0,\infty)$. Then an {\bf interpolated reflector} is the set $R=\partial(C_{\origin, S})\setminus \partial(C_{\origin, S,\infty})\subset \R^3$ where $S=\{m\rho(m)|m\in D\}$.
\end{Def}

It is interesting to note that, given an aperture that is a connected open set $D\subseteq \sphere^2$, a set is a reflector if and only if it is both a generalized reflector and an interpolated reflector. 

The type of interpolated reflector we construct below is a topological surface (see Chapter 4.36 in \cite{munkres2000topology}) and thus consists of one connected component instead of countably many. In a practical sense, when designing an interpolated reflector as opposed to a generalized reflector, new challenges are introduced. Thus, we settle for finding a necessary and sufficient condition for the existence of an interpolated reflector. 

We will consider the following formulation of the near-field reflector problem as a weak formulation of equation (4) from \cite{Oliker_1989} and its solutions, weak solutions. The following formulation only concerns the case where the target set is finite.

\subsection{Weak Solutions using Interpolated Reflectors}
Consider a connected open set $U\subseteq \R^{3+}$, a corresponding aperture $\Proj[U]$, and a finite target set $T\subseteq \R^{3-}$. Also, consider the set $\mathcal{R}_1^U(T)$ as defined by (\ref{gen-ref-set}). We then describe a set of interpolated reflectors
\begin{equation}\label{int-ref-set}
    \mathcal{R}_2^U(T)=\left\{\left.\partial(C_{\origin, S})\setminus \partial(C_{\origin, S,\infty})\right|S\in \mathcal{R}_1^U(T)\right\}.
\end{equation}
It is interesting to note that the interpolated reflectors in $\mathcal{R}_2^U(T)$ are all topological surfaces. 

Assume we are given an interpolated reflector $R\in \mathcal{R}_2^U(T)$. Let us define 
\begin{equation}
    \mathscr{B}_R=\left\{\Int(\Proj[E_d(x)\cap R])\subseteq \sphere^2\left|d\in(0,\infty),x\in T, \sigma(\Proj[E_d(x)\cap R])\neq 0\right.\right\}.
\end{equation}

The geometry of the ellipsoid and the definition of $\mathscr{B}_R$ imply that there exists an $s\in\mathcal{N}(\mathscr{B}_R)$, unique $u\in\mathscr{U}_T(\mathscr{B})$ and unique $v\in\mathscr{V}(\mathscr{B})$ such that $\partial(C_{\origin, W(\rho_Z^s)})\setminus \partial(C_{\origin, W(\rho_Z^s),\infty})=R$ where $Z=\bigcup_{B\in \mathscr{B}_R}\Psi_{u(B),v(B)}[\overline{B}]$. Therefore, for every interpolated reflector $R\in \mathcal{R}_2^U(T)$, we may define a unique $\mathscr{B}_R\in \mathcal{B}(U)$ such that for each $B\in \mathscr{B}_R$ there are unique $x_B\in T$ and $d_B\in(0,\infty)$ such that, for some $s\in\mathcal{N}(\mathscr{B}_R)$, $R=\partial(C_{\origin, W(\rho_Z^s)})\setminus \partial(C_{\origin, W(\rho_Z^s),\infty})$ where $Z=\bigcup_{B\in \mathscr{B}_R}\Psi_{x_B,d_{B}}[\overline{B}]$. 

Therefore, given a interpolated reflector $R\in \mathcal{R}_2^U(T)$, we obtain a corresponding $\mathscr{B}_R$; for each $B\in\mathscr{B}_R$ we define unique $x_B$ and $d_B$. We also obtain an $s_R\in \mathcal{N}(\mathscr{B}_R)$ and a unique $Z_R=\bigcup_{B\in \mathscr{B}_R}\Psi_{x_B,d_{B}}[\overline{B}]$ such that $R=\partial(C_{\origin, S})\setminus \partial(C_{\origin, S,\infty})$ where $S=W(\rho_{Z_R}^{s_R})$.

Given an interpolated reflector $R\in \mathcal{R}_2^U(T)$, let 
\begin{equation}
    M(m)=x_B\in T
\end{equation}
where $m\rho_{Z_R}^{s_R}(m)=\Psi_{x_B,d_{B}}(m).$ Let $y_R^2(m)$ be the points of intersection between $R\setminus\{m\rho_{Z_R}^{s_R}(m)\}$ and the line segment connecting $m\rho_{Z_R}^{s_R}(m)$ to $M(m)$. 

Given an interpolated reflector $R\in \mathcal{R}_2^U(T)$, the map $\alpha_2:\Proj[U] \to T\cup R,$
\begin{equation}
    \alpha_2(m)=
\begin{cases}

M(m) &\textnormal{ if } y_R^2(m)= \varnothing\\
y_R^2(m) & \textnormal{ if } y_R^2(m)\neq \varnothing
\end{cases}
\end{equation}
is called the \textit{interpolated reflector map}. Physically speaking, a ray of light of direction $m$ originating from $\origin$ can only reach the target set if $y_R^2(m)$ is empty. 

As before, we denote by $g\in L^1(\mathbb{S}^2)$ the energy density of the source $\origin$. Let us define for all Borel $X\subseteq\sphere^2$
\begin{equation}\label{s-meas}
    \mu_{g}(X)=\int_X g(m) d\sigma(m)
\end{equation}
where $\sigma$ denotes the standard measure on $\mathbb{S}^2.$ Assume that $g$ is a nonnegative function where $g\equiv 0$ outside of $\Proj[U]$. Physically speaking, $g$ is the radiance distribution of the source at $\origin$

In order to formulate and solve the interpolated reflector problem (in the framework of weak solutions to be defined below), we need to define a measure representing the energy generated by $g$ and redistributed by an interpolated reflector $R \in  \mathcal{R}_2^U(T ).$

Given a interpolated reflector $R\in \mathcal{R}_2^U(T)$ and a set $\omega \subseteq T$ we define the {\it visibility set of $\omega$} as 
\begin{equation}
    V_2^U(\omega)= \bigcup_{A\in\mathscr{A}}\overline{A}\setminus \{m\in\Proj[U]|\alpha_2(m)=y_R^2(m)\}
\end{equation}
where $\mathscr{A}=\{B\in \mathscr{B}_R|x_B\in \omega\}$. We now need to show that $V_2^U(\omega)$ is measurable.

\begin{prop}
Let $R$ be a interpolated reflector in $\mathcal{R}_2^U(T )$. For any set $\omega \subseteq T$, the visibility set $V_2^U(\omega)$ is Borel.
\end{prop}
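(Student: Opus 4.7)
The plan is to mirror the proof of the analogous proposition for generalized reflectors in $\mathcal{R}_1^U(T)$, since the definitions of $\alpha_2$, $M$, $y_R^2$, and $V_2^U(\omega)$ are structurally identical to those for $\alpha_1$ and $V_1^U(\omega)$ once the underlying family $\mathscr{B}_R$ has been extracted. The only genuine novelty is that the interpolated reflector $R$ is no longer presented as a countable union of ellipsoid pieces, but rather as $\partial(C_{\origin, S}) \setminus \partial(C_{\origin, S,\infty})$ for $S = W(\rho_{Z_R}^{s_R})$. This means I need one short preliminary step to show $R$ is Borel, after which the previous argument carries over essentially verbatim.

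For the preliminary step, recall that $S = \bigcup_{B \in \mathscr{B}_R} \Psi_{x_B, d_B}[B']$, where $B' = \overline{B} \setminus \bigcup_{K \in \mathscr{K}_B} \overline{K}$ is Borel in $\sphere^2$, exactly as in the earlier proof. Since $\mathscr{B}_R$ is countable and the cone operation distributes over unions, Lemma \ref{e-lemma1} applied piecewise yields that both $C_{\origin, S}$ and $C_{\origin, S,\infty}$ are countable unions of Borel sets and hence Borel. Their topological boundaries are closed, hence Borel, so $R$ is Borel as a set-theoretic difference of two Borel sets.

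Once $R$ is known to be Borel, I follow the generalized reflector template. For each $B \in \mathscr{B}_R$, set
\begin{equation}
    Q_B = C_{x_B, \Psi_{x_B, d_B}[B']} \cap \bigl(R \setminus \Psi_{x_B, d_B}[B']\bigr),
\end{equation}
which is Borel by Lemma \ref{e-lemma1} together with the Borelness of $R$, and then
\begin{equation}
    L_B = C_{x_B, Q_B, \infty} \cap \Psi_{x_B, d_B}[B'],
\end{equation}
which is Borel by the same reasoning used in the proof of the analogous proposition. Since $\Psi_{x_B, d_B}$ is continuous and injective on $\sphere^2$, the set $\Proj[L_B]$ is Borel in $\sphere^2$, and exactly as before
\begin{equation}
    \{m \in \Proj[U] \mid \alpha_2(m) = y_R^2(m)\} = \bigcup_{B \in \mathscr{B}_R} \Proj[L_B]
\end{equation}
is a countable union of Borel sets. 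Combining this with the observation that $\bigcup_{A \in \mathscr{A}} \overline{A}$ is a countable union of closed sets in $\sphere^2$ gives that $V_2^U(\omega)$ is Borel.

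The main expected obstacle is establishing Borelness of $R$ itself, which is subtler here than in the generalized reflector case because $R$ contains interpolating pieces that are not parts of any $\Psi_{x_B, d_B}[B']$ and therefore do not fall directly under Lemmas \ref{e-lemma} or \ref{e-lemma1}. Exhibiting $R$ via the two cone boundaries, as in the definition of $\mathcal{R}_2^U(T)$, sidesteps this difficulty by giving a Borel expression for $R$ without any explicit parameterization of the interpolating parts, after which the rest of the proof reduces to a direct transcription of the earlier argument with $\alpha_2$ in place of $\alpha_1$.
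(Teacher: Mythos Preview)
Your proposal is correct and follows essentially the same route as the paper: both arguments observe that $R$ is Borel because it is the difference of two topological boundaries (which are always closed), and then transcribe the $Q_B$, $L_B$, $\Proj[L_B]$ construction from the generalized-reflector case verbatim. Your preliminary step showing $C_{\origin,S}$ and $C_{\origin,S,\infty}$ are Borel is harmless but unnecessary, since the boundary of \emph{any} subset of $\R^3$ is closed; also, for cones with apex $\origin$ the relevant citation is Lemma~\ref{e-lemma} rather than Lemma~\ref{e-lemma1}.
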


\begin{proof}
We make use of the fact that sets formed from Borel sets through the operations of countable union, countable intersection, and relative complement are Borel. Recall that we obtain a $s_R\in \mathcal{N}(\mathscr{B}_R)$. Note that by the definition of an interpolated reflector in $\mathcal{R}_1^U(T )$, $R=\bigcup_{B\in \mathscr{B}_R}\Psi_{x_B,d_{B}}\left[B'\right]\cup S_R$ where $B'=\{m\in B|m\rho_{Z_R}^{s_R}=\Psi_{x_B,d_B}(m)\}=\overline{B}\setminus\bigcup_{K\in \mathscr{K}}\overline{K}$ where $\mathscr{K}=\{A\in \mathscr{B}_R|s_R(A)<s_R(B)\}$ and $S_R=R\setminus \bigcup_{B\in \mathscr{B}_R}\Psi_{x_B,d_{B}}[B']$. Note that $B'$ is clearly Borel and $B\subseteq B'\subseteq \overline{B}$.

For $B\in \mathscr{B}_R$, we have that $C_{x_B,\Psi_{x_B,d_B}[B']}$ and $C_{\origin,\Psi_{x_B,d_B}[B']}$ are Borel sets by Lemmas \ref{e-lemma1} and \ref{e-lemma} respectively. Note that $R$ is Borel as it is a boundary of an open set minus the boundary of another open set. Since $\mathscr{B}_R$ is countable and functions of the form $\Psi_{x_B,d_B}$ are continuous and bijective, $\bigcup_{B\in \mathscr{B}_R}\Psi_{x_B,d_{B}}\left[B'\right]$ is Borel. Therefore $S_R$ is also Borel. Thus for all $B\in \mathscr{B}_R$, the set $Q_B=C_{x_B,\Psi_{x_B,d_B}[B']}\cap (R\setminus \Psi_{x_B,d_B}[B'])$ is Borel and therefore the set $L_B=C_{x_B,Q_B,\infty}\cap \Psi_{x_B,d_B}[B']$ is Borel. Thus $\Proj[L_B]$ is Borel, as it is the preimage of $L_B$ under $\Psi_{x_B,d_B}$. Since 
\begin{equation}
    \{m\in\Proj[U]|\alpha_2(m)=y_R^2(m)\}=\bigcup_{B\in \mathscr{B}_R} \Proj[L_B],
\end{equation}
we have that $\{m\in\Proj[U]|\alpha_2(m)=y_R^2(m)\}$ is Borel and thus $V_2^U(\omega)$ is Borel.
\end{proof}

Define for any interpolated reflector $R\in \mathcal{R}_2^U(T)$, 
\begin{equation}
    G_2(\omega)=\mu_g(V_2^U(\omega))
\end{equation}
which we will deem {\it the energy function of interpolated reflector problem}.

Let $F$ be a nonnegative, finite measure on the finite set $T$. We say that an interpolated reflector $R\in \mathcal{R}_2^U(T)$ is a {\it weak solution to the interpolated reflector problem} if the interpolated reflector map $\alpha_2$ determined by $R$ is such that
\begin{equation}\label{weak2e}
    F(\omega)=G_2(\omega)\textnormal{ for any Borel set } \omega \subseteq T.
\end{equation}

\subsection{Main Results}

Here we prove a necessary and sufficient condition for the existence of weak solutions to the interpolated reflector problem. We proceed with the following lemma. 

\begin{lemma}\label{geo-lemma-4}
     Assume that $U$ is an open set in $\R^{3+}$, and $T$ is a finite target set in $\R^{3-}$. Assume we are given a nonnegative $g\in L^1(\mathbb{S}^2)$ such that $g\equiv 0$ outside $\Proj[U]$ and $g>0$ inside $\Proj[U]$. Let $R\in \mathcal{R}_1^U(T)$ be a generalized reflector, then we define the set $\mathscr{B}_R^x=\{B\in \mathscr{B}_R|x=x_B\}$ for $x\in T.$ Then for any $z,y\in T$ the following conditions are equivalent:
    \begin{enumerate}
        \item for all $m\in \bigcup_{A\in \mathscr{B}_R^z}A$ and $m'\in \bigcup_{B\in \mathscr{B}_R^y}B$, $\alpha_1(m)=z$ and $\alpha_1(m')=y$,
        \item $C_{z,\Psi_{z,d_A}[A]}\cap \Psi_{y,d_B}[B]=\varnothing$ and 
        $C_{y,\Psi_{y,d_B}[B]}\cap\Psi_{z,d_A}[A]=\varnothing$ for all $A\in \mathscr{B}_R^z$ and $B\in \mathscr{B}_R^y$ where $A\neq B$,
        \item $C_{x_A,\Psi_{x_A,d_A}[A]}\cap \Psi_{x_B,d_B}[B]=\varnothing$ and 
        $C_{x_B,\Psi_{x_B,d_B}[B]}\cap\Psi_{x_A,d_A}[A]=\varnothing$ for all $A,B\in \mathscr{B}_R$ where $A\neq B$,
        \item $G_1(\{x\})=\mu_g(\bigcup_{A\in \mathscr{B}_R^x}A)$ and $G_1(\{y\})=\mu_g(\bigcup_{B\in \mathscr{B}_R^y}B).$
    \end{enumerate}
\end{lemma}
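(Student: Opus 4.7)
The plan is to argue the four-way equivalence as a cycle. The implications $(1)\Leftrightarrow(2)\Leftrightarrow(3)$ are geometric and reduce to Lemmas \ref{geo-lemma-2} and \ref{geo-lemma-3} applied pairwise, while the bridge $(1)\Leftrightarrow(4)$ is measure-theoretic and is where the assumption $g>0$ on $\Proj[U]$ is essential.

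For the geometric block: $(3)\Rightarrow(2)$ is immediate because $(2)$ is the restriction of $(3)$ to pairs with targets in $\{z,y\}$. For $(2)\Rightarrow(1)$, fix $m\in A\in\mathscr{B}_R^z$; by Lemma \ref{geo-lemma}, $\alpha_1(m)=z$ is equivalent to the segment from $\Psi_{z,d_A}(m)$ to $z$ missing $R\setminus\{\Psi_{z,d_A}(m)\}$, and this decomposes piece by piece into the non-blocking conditions handled by Lemma \ref{geo-lemma-3} (or Lemma \ref{geo-lemma-2} when $z=y$) in combination with $(2)$; the case $m'\in B\in\mathscr{B}_R^y$ is symmetric. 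For $(1)\Rightarrow(3)$, apply Lemma \ref{geo-lemma-3} in reverse to every pair $A\neq B$ in $\mathscr{B}_R$: assumed over all $(z,y)\in T\times T$, condition $(1)$ covers every such pair via the corresponding targets $x_A, x_B$.

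For $(1)\Rightarrow(4)$: if no $m\in\bigcup_{A\in\mathscr{B}_R^z}A$ is blocked, then the set subtracted in the definition of $V_1^U(\{z\})$, intersected with $\bigcup_{A\in\mathscr{B}_R^z}\overline{A}$, lies inside $\bigcup_{A\in\mathscr{B}_R^z}(\overline{A}\setminus A)$, a countable union of boundary sets of the pieces in $\mathcal{B}(U)$ that is $\sigma$-null and hence $\mu_g$-null; the same holds for $y$. For $(4)\Rightarrow(1)$, condition $(4)$ forces the blocked portion of $\bigcup_{A\in\mathscr{B}_R^z}A$ to be $\mu_g$-null; since $g>0$ on $\Proj[U]$, it is also $\sigma$-null. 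To promote this to an empty set, I would invoke that blockage is an open condition on $m$: if the segment from $\Psi_{z,d_A}(m_0)$ to $z$ meets an interior point of some obstructing piece $\Psi_{x_B,d_B}[B']$, nearby $m$ produce segments that still strike a neighborhood on that piece, yielding a positive-$\sigma$-measure open set of blocked directions and a contradiction.

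The main obstacle is making this openness step rigorous. It requires tracking the continuous dependence of the reflected segment on $m$ on the interior of each piece $A\in\mathscr{B}_R$ and confirming that transverse interior intersections with an obstructing ellipsoidal piece persist under small perturbations of $m$; the countability of $\mathscr{B}_R$ ensures only countably many potential obstructors contribute at any given $m_0$, which is what allows the measure-theoretic conclusion of $(4)$ to be upgraded to the pointwise statement of $(1)$.
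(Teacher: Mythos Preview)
Your logical skeleton matches the paper's: the geometric equivalences $(1)\Leftrightarrow(2)\Leftrightarrow(3)$ are handled exactly as you say, by invoking Lemma~\ref{geo-lemma-3} pairwise (with the ``for any $z,y\in T$'' read as a universal quantifier so that $(2)$ ranging over all pairs coincides with $(3)$), and $(1)\Rightarrow(4)$ is trivial.

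The one place you and the paper diverge is the return trip from $(4)$. You aim for $(4)\Rightarrow(1)$ directly and then worry, correctly, about promoting a $\sigma$-null blocked set to an empty one via a transversality/continuity argument. The paper sidesteps this entirely by proving the contrapositive $\neg(2)\Rightarrow\neg(4)$ instead. Starting from $\neg(2)$ hands you specific open pieces $A\in\mathscr{B}_R^z$, $B'\in\mathscr{B}_R^y$ with $Q=C_{z,\Psi_{z,d_A}[A]}\cap\Psi_{y,d_{B'}}[B']\neq\varnothing$; since $A$ and $B'$ are open, the associated cones from $z$ are open away from the vertex, so $Q$ is open in $E_{d_{B'}}(y)$ and $\Proj[Q]\subseteq B'$ is open in $\sphere^2$. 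Because $g>0$ on $\Proj[U]$, this immediately gives $\mu_g(\Proj[Q])>0$ and hence $G_1(\{y\})\le\mu_g\bigl(\bigcup_{B\in\mathscr{B}_R^y}B\bigr)-\mu_g(\Proj[Q])$, contradicting $(4)$. The advantage of routing through $(2)$ rather than $(1)$ is that openness is automatic from the set-theoretic description of $Q$ as an intersection of open cones with an open surface patch; you never need to track the dependence of an individual reflected segment on $m$ or argue about transverse intersections. Your approach would work once the openness step is nailed down, but the paper's choice of contrapositive target makes that step a one-liner.
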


\begin{proof}
    $(1)\Leftrightarrow (2)$ by Lemma \ref{geo-lemma-3}, clearly $(3)\Leftrightarrow (2)$, and $(1)$ trivially implies $(4)$.

    We only need to prove that $(4)$ implies $(2)$; we prove the contrapositive. Assume that there exists an $A\in \mathscr{B}_R^x$ and a $B'\in \mathscr{B}_R^y$ such that, without loss of generality, $C_{z,\Psi_{z,d_A}[A]}\cap\Psi_{y,d_{B'}}[B']\neq \varnothing.$ Let $Q=C_{z,\Psi_{z,d_A}[A]}\cap\Psi_{y,d_{B'}}[B']$. Note that $C_{z,\Psi_{z,d_A}[A],\infty}\cap C_{z,\Psi_{y,d_{B'}}[B'],\infty}=C_{z,Q,\infty}$. Since $C_{z,\Psi_{z,d_A}[A],\infty}\setminus \{\origin\}$ and $C_{z,\Psi_{y,d_{B'}}[B'],\infty}\setminus \{\origin\}$ are open, $C_{z,Q,\infty}\setminus\{\origin\}$ is open; thus $C_{z,Q,\infty}\cap E_{d_B}(y)$ is open in $E_{d_B}(y)$. Thus $\Proj[Q]\subseteq B$ is open and, since $g>0$ in $U$, $\mu_g(\Proj[Q])>0$, and thus $G_1(\{y\})\le \mu_g(\bigcup_{B\in \mathscr{B}_R^y}B)-\mu_g(\Proj[Q]).$

\end{proof}

Note the following definition.

\begin{Def}
Let $K$ be a subset of $\R^n$ where $n\ge2$ such that $\overline{K}$ is compact. The complement $U=\R^n\setminus \overline{K}$ is an open set. For sufficiently large $R>0$, the set $V=\{x|R<|x|\}$ is contained in $U$. Since $V$ is connected, there exists a connected component of $U$ that contains $V$. This is the unique unbounded connected component of $U$.

We define the {\bf exterior boundary} of $K$ as the boundary of the unbounded connected component of $\R^n\setminus \overline{K}$. We denote this as $\partial_E(K).$
\end{Def}

The following result gives a condition that is necessary and sufficient for the existence of weak solutions to the interpolated reflector problem. 

\begin{theorem}\label{nesssuff}
Let $U\subset \R^{3+}$ be a simply connected open set such that $\overline{U}\subset \R^{3+}$, and $T\subset\R^{3-}$ be a finite set. Assume we are given a nonegative $g\in L^1(\mathbb{S}^2)$ such that $g\equiv 0$ outside $\Proj[U]$ and $g>0$ inside $\Proj[U]$. Let $F$ be a measure over $T$ such that
\begin{equation}
    F(T)=\mu_g(\Proj[U]).
\end{equation}

Then there exists an interpolated reflector $R_2\in \mathcal{R}_2^{U}(T)$ that is a weak solution to the interpolated reflector problem as defined in (\ref{weak2e}) if and only if there exists a generalized reflector $R_1\in \mathcal{R}_1^{U}(T)$ that is a weak solution to the generalized reflector problem as defined in (\ref{weak1e}) where $R_1$ is a subset of a simply connected subset of 
\begin{equation}\label{e-finalthrm}
    \overline{U}\cap \partial_E\left(C_{\origin,R_1}\cup \bigcup_{B\in \mathscr{B}_{R_1}}C_{x_B,\Psi_{x_B,d_B}[B]}\right).
\end{equation}
\end{theorem}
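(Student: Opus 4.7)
The strategy is to exhibit an explicit canonical correspondence between $\mathcal{R}_1^U(T)$ and $\mathcal{R}_2^U(T)$ via
\[R_1 \;\longleftrightarrow\; R_2 \;=\; \partial(C_{\origin,R_1})\setminus\partial(C_{\origin,R_1,\infty}),\]
and show that the weak-solution property transfers across this correspondence precisely when the exterior boundary condition (\ref{e-finalthrm}) holds. Along the way, I would repeatedly exploit the fact that the interpolating pieces $R_2\setminus R_1$ are radial from $\origin$, lying along the set of jumps of the polar-radius function $\rho_{Z_{R_1}}^{s_{R_1}}$.

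For the direction $(\Leftarrow)$, given a generalized weak solution $R_1$ satisfying (\ref{e-finalthrm}), define $R_2$ as above; it lies in $\mathcal{R}_2^U(T)$ by (\ref{int-ref-set}). I would first verify $\mathscr{B}_{R_2}=\mathscr{B}_{R_1}$ with identical focal data $(x_B,d_B)$: the radial interpolating slivers intersect any ellipsoid $E_d(x)$ in at most a $1$-dimensional set, so they contribute nothing to the $\Int(\Proj[\,\cdot\,])$ used to define $\mathscr{B}$. Then I would invoke the exterior boundary hypothesis to show that the interpolating slivers do not obstruct reflected rays: since $R_1 \subseteq \partial_E\bigl(C_{\origin,R_1}\cup\bigcup_B C_{x_B,\Psi_{x_B,d_B}[B]}\bigr)$, the solid $C_{\origin,R_1}$ sits on the ``incident side'' of $R_1$ while each reflected-ray cone $C_{x_B,\Psi_{x_B,d_B}[B]}$ sits on the opposite side, so the interpolating slivers (which lie on $\partial C_{\origin,R_1}$) miss the interiors of the reflected-ray cones. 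This yields $y_{R_2}^2(m)=y_{R_1}^2(m)$ for a.e.\ $m\in\Proj[U]$ and hence $\alpha_2=\alpha_1$ a.e.; consequently $V_2^U(\omega)=V_1^U(\omega)$ modulo $\sigma$-null sets and $G_2(\omega)=G_1(\omega)=F(\omega)$.

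For the direction $(\Rightarrow)$, given an interpolated weak solution $R_2$, extract its ellipsoid layer
\[R_1 \;:=\; \bigcup_{B\in\mathscr{B}_{R_2}}\Psi_{x_B,d_B}[B'],\qquad B'=\overline{B}\setminus\bigcup_{s_{R_2}(A)<s_{R_2}(B)}\overline{A},\]
using the disjointification from the measurability proof. Since $R_1\subseteq R_2$, any blockage of a reflected ray by $R_1$ is also a blockage by $R_2$, so $V_2^U(\omega)\subseteq V_1^U(\omega)$ up to null sets and $G_2(\omega)\le G_1(\omega)$ for every Borel $\omega\subseteq T$. The mass-balance chain
\[F(T)\;=\;G_2(T)\;\le\;G_1(T)\;\le\;\mu_g(\Proj[U])\;=\;F(T)\]
forces $G_1=G_2=F$ on every Borel $\omega$, so $R_1$ is a generalized weak solution. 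I would establish (\ref{e-finalthrm}) by contradiction via Lemma \ref{geo-lemma-4}: if a portion of $R_1$ lay in the interior of $C_{\origin,R_1}\cup\bigcup_B C_{x_B,\Psi_{x_B,d_B}[B]}$, then the interpolating surface of $R_2$ abutting that portion would meet the interior of some $C_{x_B,\Psi_{x_B,d_B}[B]}$ in an open set whose projection has positive $\sigma$-measure; because $g>0$ on $\Proj[U]$, this blocks a positive amount of energy and strictly lowers $G_2(T)$, contradicting $G_2(T)=F(T)$. Finally, $R_1\subseteq R_2$ is contained in the topological surface $R_2$, which is a simply connected surface parameterized over the simply connected aperture $\Proj[U]$ inherited from $U$; taking this $R_2$ (intersected with $\overline{U}\cap\partial_E(\cdot)$, which it meets by the previous step) gives the required simply connected containing subset.

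The main obstacle will be the rigorous a.e.\ identification $\alpha_2=\alpha_1$ and the converse non-blocking argument: the interpolating slivers of $R_2$ are radial, so their projection from $\origin$ is $\sigma$-null and no incident ray ``lands'' on them in a set of positive measure, yet they can intercept obliquely-traveling reflected rays on sets of positive measure unless (\ref{e-finalthrm}) rules it out. Packaging this blocking/non-blocking dichotomy via Lemmas \ref{geo-lemma-1}--\ref{geo-lemma-4} and the geometric meaning of $\partial_E$ is the technical heart of the argument; everything else reduces to unwinding the definitions of $\mathscr{B}_R$, $\alpha_i$, $V_i^U$, and $G_i$.
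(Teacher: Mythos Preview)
Your approach is essentially the paper's: the same $R_1\leftrightarrow R_2$ correspondence, the same appeal to Lemma~\ref{geo-lemma-4} in the forward direction, and the same reduction of the converse to showing $R_2$ lies in the simply connected set $Q\subseteq\overline{U}\cap\partial_E(\cdot)$. Your write-up is in fact more detailed than the paper's (which declares several steps ``clear'' and in $(\Leftarrow)$ records only $R_2\subset Q$); the one place you take a longer route is the exterior-boundary step in $(\Rightarrow)$---once you have established $G_1=F$ via your mass-balance inequality, you can apply Lemma~\ref{geo-lemma-4} directly to $R_1$ (condition~(4)$\Rightarrow$(3)) to obtain the pairwise non-intersection and hence $R_1\subseteq\partial_E(\cdot)$, without detouring back through $R_2$-blocking.
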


\begin{proof}

It is clear that if $R_2\in \mathcal{R}_2^{U}(T)$ that is a weak solution to the interpolated reflector problem as defined in (\ref{weak2e}), then, for any $s\in \mathcal{N}(\mathscr{B}_{R_2})$, $R_1=W(\rho_F^s)$ where $F=\bigcup_{B\in \mathscr{B}_{R_2}}\Psi_{x_B,d_{B}}[\overline{B}]$ is a weak solution to the generalized reflector problem as defined in (\ref{weak1e}). Note that this implies that $\mathscr{B}_{R_1}=\mathscr{B}_{R_2}$. Since $g$ is positive in $\Proj[U]$, for our reflector $R_1$, by Lemma \ref{geo-lemma-4}, for all distinct $A,B\in \mathscr{B}_{R_1}$, $C_{x_A,\Psi_{x_A,d_A}[A]}\cap \Psi_{x_B,d_B}[B]=\varnothing$ and $C_{x_B,\Psi_{x_B,d_B}[B]}\cap\Psi_{x_A,d_A}[A]=\varnothing$. Therefore, for all $A\in \mathscr{B}_{R_1}$, $\Psi_{x_A,d_A}[\overline{A}]\cap \Int\left(C_{\origin,R_1}\cup \bigcup_{B\in \mathscr{B}_{R_1}}C_{x_B,\Psi_{x_B,d_B}[B]}\right)=\varnothing$. Also observe that for all $A\in \mathscr{B}_{R_1}$, $(C_{\origin, \Psi_{x_A,d_A}[A],\infty}\setminus C_{\origin, \Psi_{x_A,d_A}[A]})\cap  \Int\left(C_{\origin,R_1}\cup \bigcup_{B\in \mathscr{B}_{R_1}}C_{x_B,\Psi_{x_B,d_B}[B]}\right)=\varnothing.$

Therefore, $R_1$ is a subset of a simply connected subset of
\begin{equation}
\partial_E \left(C_{\origin,R_1}\cup \bigcup_{B\in \mathscr{B}_{R_1}}C_{x_B,\Psi_{x_B,d_B}[B]}\right).
\end{equation}
Since the interpolated reflector must be contained in $\overline{U}$, we obtain (\ref{e-finalthrm}).

Conversely, if there exists a reflector $R_1\in \mathcal{R}_1^{U}(T)$ that is a weak solution of the generalized reflector problem as defined in (\ref{weak1e}) where $R_1\subset Q$ such that $Q$ is a simply connected subset of (\ref{e-finalthrm}), then $R_2=
 \partial(C_{\origin, R_1})\setminus \partial(C_{\origin, R_1,\infty})$ is also a subset of $Q$.
\end{proof}

\section{Discussion}

In this note, with respect to the near-field reflector problem with spatial restrictions, we defined two different kinds of weak solutions. For the first weak solution, we proved, under certain assumptions, the existence of a generalized reflector where the target set is multiple points. A possible avenue for further research is to attempt to expand Theorem \ref{rot-sym-gen-ref} for different target sets, apertures, and spatial restrictions. Another idea might be to try to come up with designs such that the generalized reflectors have finitely many connected components instead of countably many. The author believes that the following statement is true.

\begin{conj}
Let $U$ be an open set in $\sphere_+^2$, $\delta,z'>0$, and $\{x_1,\dots,x_k\}\in \R^{3-}$ where $k\ge 2$. Assume we are given a positive $g\in L^1(\mathbb{S}^2)$ where $g\equiv 0$ outside $U$. Let $f_1,f_2,\dots,f_k$ be nonnegative real numbers such that
\begin{equation}
    \sum_{i=1}^k f_i=\mu_g(U).
\end{equation}
Then there exists a generalized reflector $R\in \mathcal{R}_1^{\mathscr{C}_U^\delta(z')}(T)$ such that $G_1(\{x_i\})=f_i$ for all $i\in [k]$.
\end{conj}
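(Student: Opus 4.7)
My plan is to reduce the conjecture to Theorem \ref{e-main2}: it suffices to exhibit, for any open $U \subseteq \sphere_+^2$ and any target/energy data as in the statement, disjoint open pieces $B_1,\dots,B_n \subseteq U$ with $\bigcup_{i\in[n]} \overline{B_i} = \overline{U}$, a target assignment $\{A_t\}_{t\in[k]}$ realising $\mu_g(\bigcup_{i \in A_t} B_i) = f_t$, and heights $(a_i,b_i)$ inside $[z',z'+\delta]$ so that the non-interference condition $\mathscr{C}_{B_i}^{b_i}(a_i) \cap C_{x_j,\mathscr{C}_{B_j}^{b_j}(a_j)} = \varnothing$ holds for all $i \neq j$.

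First I would fix a parameter $\epsilon > 0$ and produce a partition of $\overline{U}$ into disjoint open pieces $B_1,\dots,B_n$ of spherical diameter less than $\epsilon$ whose closures cover $\overline{U}$; this is routine using a sufficiently fine spherical mesh and handling boundary pieces of $U$ by intersection. Then, using the positivity of $g$ and an intermediate-value argument (cutting individual $B_i$ with great-circle arcs when necessary), I would refine the partition and produce an index partition $\{A_t\}$ for which $\mu_g(\bigcup_{i \in A_t} B_i) = f_t$ exactly. Positivity of $g$ on $U$ is crucial here, as it guarantees that any open sub-piece of positive measure can be subdivided into parts of arbitrarily prescribed positive measure.

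Second I would split the height range $(z', z' + \delta)$ into $n$ subintervals and assign one sub-cylinder $\mathscr{C}_{B_i}^{b_i}(a_i)$ per piece. Since every target sits in $\R^{3-}$ and every $B_i \subset \sphere_+^2$, line segments from $x_j$ to points in $\mathscr{C}_{B_j}^{b_j}(a_j)$ ascend into the cylinder from below, so interference with $\mathscr{C}_{B_i}^{b_i}(a_i)$ is possible only when slab $i$ sits beneath slab $j$. The question therefore reduces to whether the cross-section of $C_{x_j,\mathscr{C}_{B_j}^{b_j}(a_j)}$ at the height of slab $i$, viewed from $\origin$, lies inside the spherical wedge $B_i$. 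Because $B_j$ has spherical diameter at most $\epsilon$ and all heights involved are bounded away from those of $T$ by at least $z'$, this cross-section has angular size $O(\epsilon)$ with constant depending only on $T$, $z'$, and $\delta$.

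The main obstacle is proving that a single choice of $\epsilon$ and of height ordering produces non-interference across every pair $(i,j)$ simultaneously. To make this rigorous one needs (a) a uniform angular bound on how much each target cone $C_{x_j,\mathscr{C}_{B_j}^{b_j}(a_j)}$ spreads as it crosses the lower slabs, exploiting the positive separation between the height range $[z',z'+\delta]$ and the heights of $T$; and (b) a combinatorial rule for which pieces go into which height slab so that two pieces assigned to the same target do not self-shadow — for example, placing the piece lying farther from $x_j$ (as measured along the ray from $x_j$) at a higher height. With these quantitative geometric estimates in hand, a sufficiently small $\epsilon$ and a suitably chosen height assignment verify the hypothesis of Theorem \ref{e-main2}, which then delivers the desired generalized reflector.
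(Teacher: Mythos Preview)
The statement you are addressing is presented in the paper as an open \emph{conjecture}; the paper gives no proof, so there is nothing to compare your argument against on that front.

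Your reduction to Theorem~\ref{e-main2} is the natural strategy, and the first part of your plan (a fine partition of $U$ followed by an intermediate-value refinement to hit each $f_t$ exactly) is unproblematic. The genuine gap is the height-assignment step. You assert that heights $(a_i,b_i)$ can be chosen so that $\mathscr{C}_{B_i}^{b_i}(a_i)\cap C_{x_j,\mathscr{C}_{B_j}^{b_j}(a_j)}=\varnothing$ for every $i\neq j$, but you do not prove it. Your heuristic ``place the piece farther from $x_j$ higher'' only orders pairs assigned to the \emph{same} target, says nothing about cross-target pairs, and is not even globally consistent for a single target: two adjacent pieces equidistant from $x_j$ receive no ordering, and with several targets the induced partial orders need not be mutually compatible. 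The $O(\epsilon)$ bound on the angular size of each target-cone cross-section is correct but beside the point --- smallness of the cross-section does not prevent it from landing exactly on the neighbouring piece $B_i$ sitting in the slab just below. In Theorem~\ref{rot-sym-gen-ref} the analogous non-interference is obtained only because each wedge $P_i(j)$ generates a convex $\origin$-cone containing both its slab and its target, a structural device for which you offer no substitute in the general setting. Until you exhibit a height assignment that provably satisfies all pairwise constraints simultaneously, the reduction to Theorem~\ref{e-main2} is incomplete --- which is presumably why the author records the statement as a conjecture rather than a theorem.
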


For the second weak solution, we proved a theorem that detailed a necessary and sufficient condition for the existence of an interpolated reflector. The advantage of our interpolated reflectors, as opposed to our generalized reflectors, is that our interpolated reflector design is a topological surface; thus it is easier to construct from an engineering perspective. An obvious avenue for further work would be to create some practically useful interpolated reflectors; using Theorem \ref{e-main2} might be useful in this regard. In fact, it would be very useful if the following conjecture is true. 

\begin{conj}
Let $U$ be a simply connected open set in $\sphere_+^2$, $\delta,z'>0$, and $\{x_1,\dots,x_k\}\in \R^{3-}$. Assume we are given a nonnegative $g\in L^1(\mathbb{S}^2)$ where $g\equiv 0$ outside $U$. Let $f_1,f_2,\dots,f_k$ be nonnegative real numbers such that
\begin{equation}
    \sum_{i=1}^k f_i=\mu_g(U).
\end{equation}

Then there exists an interpolated reflector in $R\in\mathcal{R}_2^{\mathscr{C}_U^\delta(z')}(T)$ such that $G_2(\{x_i\})=f_i$ for all $i\in [k]$.
\end{conj}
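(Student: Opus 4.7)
By Theorem \ref{nesssuff}, it suffices to construct a generalized reflector $R_1 \in \mathcal{R}_1^{\mathscr{C}_U^\delta(z')}(T)$ with $G_1(\{x_i\}) = f_i$ for all $i \in [k]$, such that $R_1$ lies in a simply connected subset of the exterior boundary set (\ref{e-finalthrm}). So my plan is to adapt the partition construction of Theorem \ref{e-main2} to produce such an $R_1$, and then invoke Theorem \ref{nesssuff} to pass to the desired interpolated reflector.

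First, I would partition the simply connected aperture $U$ into $k$ disjoint open pieces $U_1, \ldots, U_k$ with $\mu_g(U_i) = f_i$ and $\bigcup_i \overline{U_i} = \overline{U}$. Since $g$ is positive on $U$ and $U$ is simply connected, such a partition can be made in a tree-like fashion so that every $U_i$ is itself simply connected (for instance by a repeated intermediate-value cut along well-chosen curves, as in the construction of $B_i$ in Theorem \ref{rot-sym-gen-ref}). Then for each $i$, Theorem \ref{e-prop} applied to $U_i$, $x_i$, and a thin slab $\mathscr{C}_{U_i}^{b_i}(a_i) \subseteq \mathscr{C}_U^\delta(z')$ produces a generalized reflector in $\mathcal{R}_1^{\mathscr{C}_{U_i}^{b_i}(a_i)}(\{x_i\})$ that focuses the entire radiance $\mu_g(U_i) = f_i$ onto $x_i$.

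The main obstacle is twofold. First, for Theorem \ref{e-main2} to glue these pieces into one generalized reflector, one must choose the slab parameters $a_i, b_i$ so that the disjointness condition $\mathscr{C}_{U_i}^{b_i}(a_i) \cap C_{x_j, \mathscr{C}_{U_j}^{b_j}(a_j)} = \varnothing$ holds for all $j \neq i$. In the rotationally symmetric setting of Theorem \ref{rot-sym-gen-ref} this was arranged by stacking annular slabs of decreasing polar opening at increasing heights, so that cones from below never meet slabs above. For arbitrary $U_i$ and arbitrary target configurations the naive stacked arrangement fails, because a ray from a low-lying target $x_j$ up to a high slab can pierce an intermediate slab assigned to some other target. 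Engineering a general stacking rule — perhaps using very thin slabs interlocked so that each target's upward cone threads through the gaps — is the essential combinatorial/geometric difficulty, and this is the step I expect to be hardest.

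Second, even given a valid family of slabs, one must verify the simply connected condition of Theorem \ref{nesssuff}: the ellipsoid patches $\Psi_{x_i, d_i}[\overline{U_i}]$ must together lie in a simply connected subset of (\ref{e-finalthrm}). If each $U_i$ has been chosen simply connected and adjacent patches meet only along common boundary arcs at their shared heights, I would hope to get this by a retraction/deformation argument inside $\overline{\mathscr{C}_U^\delta(z')}$, exploiting the simple connectedness of $U$ itself. If this piecewise strategy proves unsalvageable in full generality, the fallback would be to abandon the slab construction entirely and set up a direct limiting or optimal-transport construction over simply connected star-shaped surfaces in $\overline{\mathscr{C}_U^\delta(z')}$ — a substantially harder analytic problem, but one that bypasses both obstacles above in a single step.
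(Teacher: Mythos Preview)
The statement you are addressing is labeled in the paper as a \emph{conjecture}, not a theorem; the paper offers no proof and explicitly presents it as open. So there is no ``paper's own proof'' to compare your proposal against. Your write-up is therefore best read as a proof \emph{strategy} for an open problem, and indeed you present it that way, flagging the two places where the argument is incomplete.

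Viewed on its own merits, the outline is reasonable but has genuine gaps beyond the ones you already acknowledge. First, Theorem~\ref{nesssuff} requires $g>0$ on $\Proj[U]$, whereas the conjecture only assumes $g$ nonnegative; you cannot invoke the necessary-and-sufficient criterion without either strengthening the hypothesis or reproving the forward direction of Theorem~\ref{nesssuff} under the weaker assumption. Second, even granting positivity, the two obstacles you identify are precisely the substance of the conjecture: arranging slab heights so that the cones $C_{x_j,\mathscr{C}_{U_j}^{b_j}(a_j)}$ avoid all other slabs works in the rotationally symmetric case of Theorem~\ref{rot-sym-gen-ref} only because the annuli are nested about the $z$-axis and the targets respect that symmetry; for generic $U_i$ and generic $x_j\in\R^{3-}$ no ordering of heights prevents a low target's cone from piercing intermediate slabs. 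Your suggestion of ``very thin slabs interlocked so that each target's upward cone threads through the gaps'' is a picture, not an argument, and the fallback optimal-transport idea is a different problem altogether. In short, your proposal correctly locates where the difficulty lives but does not resolve it, which is consistent with the paper's decision to leave the statement as a conjecture.
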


Another fruitful avenue of research might be to somehow expand these definitions of weak solutions to account for cases where the target set is not finite. Then, proving the existence of generalized and interpolated reflectors with continuous irradiance distributions.

As the reader might have noticed, we make no attempt to address the near-field reflector problem with spatial conditions with a reflector. Instead, we exclusively use generalized or interpolated reflectors. While it may be interesting to research reflectors in order to get a `stronger' solution, it is the author's view that, in general, it is not possible to construct a reflector under spatial conditions. For example, if the target set is a single point, the solution to the near-field reflector problem is an ellipsoid. However, if given spatial restrictions, a single ellipsoid, in general, cannot fit those restrictions; this is demonstrated in Theorem \ref{e-prop}. If no reflector exists for a single point, the prospects for more complicated target sets and irradiance distributions appear limited. 

\bibliography{nearfield}

\end{document}